\newtheorem{theorem}{Theorem}[section]
\newtheorem{lemma}[theorem]{Lemma}
\newtheorem{proposition}[theorem]{Proposition}
\theoremstyle{definition}
\newtheorem{definition}[theorem]{Definition}
\newtheorem{remark}[theorem]{Remark}
\newtheorem{example}[theorem]{Example}
\begin{document}
\date{2017-8-22}
\title[Relative weak mixing of W*-dynamical systems]{Relative weak mixing of W*-dynamical systems via joinings}
\author{Rocco Duvenhage and Malcolm King}
\address{Department of Physics\\
University of Pretoria\\
Pretoria 0002\\
South Africa}

\email{rocco.duvenhage@up.ac.za}

\address{Department of Mathematics and Applied Mathematics\\
University of Pretoria\\
Pretoria 0002\\
South Africa}

\begin{abstract}
A characterization of relative weak mixing in W*-dynamical systems in terms of
a relatively independent joining is proven.

\end{abstract}
\subjclass[2010]{Primary 46L55}
\keywords{W*-dynamical systems; relative weak mixing; relatively independent joinings.}
\maketitle

\section{Introduction}

This paper studies relative weak mixing for W*-dynamical systems in terms of
joinings. Here a W*-dynamical system refers to a von Neumann algebra with a
faithful normal tracial state which is invariant under the dynamics, given by
iteration of a fixed $\ast$-automorphism of the von Neumann algebra (i.e. we
focus exclusively on actions of the group $\mathbb{Z}$). The main result is a
characterization of relative weak mixing in terms of relative ergodicity of
the relative product of the system with its mirror image on the commutant (in
the cyclic representation). The relative product system is defined using the
relatively independent joining obtained from the conditional expectation onto 
the von Neumann subalgebra relative to which we are working. Generalizing the 
classical case, the subalgebra in question is always taken to be globally 
invariant under the dynamics of the W*-dynamical system.

The proof involves a careful analysis of the interplay between the von Neumann
algebra, its commutant, and the conditional expectation. Some results of 
independent interest obtained on the way to the main result, do not require the 
state to be tracial. In this case, we need to restrict ourselves to subalgebras 
which are globally invariant under the modular group, to ensure the existence 
of the conditional expectation.

In classical ergodic theory it is well known that a dynamical system is weakly
mixing if and only if its product with itself is ergodic. Our main result is
essentially noncommutative and relative version of this.

A noncommutative theory of joinings has been developed in \cite{D1}, \cite{D2}
and \cite{D3}, generalizing some aspects of the classical theory (see \cite{G}
for a thorough treatment, and \cite{F67} as well as \cite{Rud} for the
origins). It included a study of weak mixing, relative ergodicity and compact
subsystems. Subsequent work was done in \cite{BCM}, which among other things
developed various characterizations of joinings and also obtained a more
complete theory for weak mixing, building on an approach to noncommutative
joinings outlined in \cite[Section 5]{KLP}. Earlier work related to
noncommutative joinings appeared in \cite{ST}, connected to entropy, and
\cite{F1}, regarding ergodic theorems.

An investigation of relative weak mixing is a natural next step in the
development of the theory of noncommutative joinings. Relative weak mixing has
already been studied and used very effectively in the noncommutative context
in \cite{P} and \cite{AET}, but not from a joining point of view.

In particular, the authors of \cite{AET} proved quite a remarkable structure 
theorem, namely that an asymptotically abelian W*-dynamical system is weakly 
mixing relative to the center of the von Neumann algebra. This allowed them to
apply classical ergodic results to the system on the center, and then extend 
these results to the noncommutative system. They defined relative weak mixing 
in terms of a certain ergodic limit, which is the approach taken in this paper 
as well. However, we adapt their definition to a form which is more convenient 
in the proof of our main result. The two definitions are nevertheless 
equivalent when the invariant state is tracial. To prove this, we make use of 
the semi-finite trace obtained in the basic construction from the von Neumann 
algebra and the subalgebra relative to which we are working.

Since systems which are not asymptotically abelian do occur, we do not assume
asymptotic abelianness in this paper. 

Furthermore, systems can be weakly mixing relative to nontrivial subalgebras
other than the center. This includes cases where the von Neumann algebra of
the system is a factor (i.e. when the center is trivial). Therefore we work
relative to more general von Neumann subalgebras.

In the classical case, relative weak mixing is often defined in terms of a
relatively independent joining, or relative product, illustrating the
importance of this characterization in the classical case. However, it is in
many cases just stated for ergodic systems, since any system can be decomposed
into ergodic parts. See for example \cite[Theorem 7.5]{F77}, \cite[Definition
7.9]{Z2} and \cite[Definition 9.22]{G}. But we note that in \cite{FK} and
\cite[Definition 6.2]{F81}, on the other hand, ergodicity is not assumed.

In the noncommutative case the assumption of ergodicity becomes problematic,
as typically some form of asymptotic abelianness is required to do an ergodic
decomposition. See for example \cite[Subsection 4.3.1]{BR1} for an exposition.
Therefore we study the joining characterization of relative weak mixing
without the assumption of ergodicity. In particular the proof of our main
result has to deal with the difficulty of the system not being ergodic.

A number of other noncommutative relative ergodic properties have already been
studied in the literature, for example in \cite{DM}, building on ideas from
\cite{F2}, which was based in turn on variations of unique ergodicity as
studied in \cite{AD}. Those properties, however, are more of a topological
nature, rather than purely measure theoretic in origin, if one thinks in terms
of classical ergodic theory, and the techniques involved are quite different
from those in this paper.

The required background on relatively independent joinings is reviewed in
Section \ref{afdROB}, which also sets out much of the notation used later in
the paper. The definition of relative weak mixing is formulated in Section
\ref{afdRSV}. Some relevant characterizations in terms of ergodic limits are
then derived. A noncommutative example is subsequently presented to illustrate
the points made above regarding asymptotic abelianness, the center, and
ergodicity. The main result of the paper, and its proof, appear in Section
\ref{afdBewys}.

\section{Relatively independent joinings\label{afdROB}}

For convenience we summarize the special case of relatively independent
joinings that we need here, along with some additional definitions.
Simultaneously this fixes notation that will be used throughout the paper. We
use the same setup as in \cite{D1, D2, D3}. Please refer in particular to
\cite[Sections 2 and 3]{D3} for further discussion.

In the remainder of this paper W*-dynamical systems are referred to simply as
``systems'' and they are defined as follows:

\begin{definition}
\label{stelsel}A \emph{system} $\mathbf{A}=\left(  A,\mu,\alpha\right)  $
consists of a faithful normal state $\mu$ on a (necessarily $\sigma$-finite)
von Neumann algebra $A$, and a $\ast$-automorphism $\alpha$ of $A$, such that
$\mu\circ\alpha=\mu$.
\end{definition}

In the rest of the paper, the symbols $\mathbf{A}$, $\mathbf{B}$ and
$\mathbf{F}$ denote systems $\left(  A,\mu,\alpha\right)  $, $\left(
B,\nu,\beta\right)  $ and $\left(  F,\lambda,\varphi\right)  $. For
$\mathbf{A}$ we assume without loss that $A$ is a von Neumann algebra on the
Hilbert space $H$, with $\mu$ given by a cyclic and separating vector
$\Omega\in H$, i.e.%
\[
\mu(a)=\left\langle \Omega,a\Omega\right\rangle
\]
for all $a\in A$.

\begin{definition}
A \textit{joining} of $\mathbf{A}$ and $\mathbf{B}$ is a state $\omega$ on the
algebraic tensor product $A\odot B$ such that $\omega\left(  a\otimes
1_{B}\right)  =\mu(a)$, $\omega\left(  1_{A}\otimes b\right)  =\nu(b)$ and
$\omega\circ\left(  \alpha\odot\beta\right)  =\omega$ for all $a\in A$ and
$b\in B$.
\end{definition}

The modular conjugation associated to the state $\mu$, will be denoted by $J$,
and we let
\[
j:B(H)\rightarrow B(H):a\mapsto Ja^{\ast}J.
\]

The dynamics $\alpha$ of a system $\mathbf{A}$ can be represented by a unitary
operator $U$ on $H_{\mu}$ defined by extending
\[
Ua\Omega:=\alpha(a)\Omega.
\]
It satisfies
\[
UaU^{\ast}=\alpha(a)
\]
for all $a\in A$.

\begin{definition}
We call $\mathbf{F}$ a \emph{subsystem} of $\mathbf{A}$ if $F$ is a von
Neumann subalgebra of $A$ (containing the unit of $A$) such that $\mu
|_{F}=\lambda$ and $\alpha|_{F}=\varphi$. If $F$ is globally invariant under
modular group associated to $\mu$, then $\mathbf{F}$ is called a \emph{modular
subsystem} of $\mathbf{A}$.
\end{definition}

Throughout the rest of the paper, $\mathbf{F}$ will be a modular subsystem of
$\mathbf{A}$. Note that if the state $\mu$ of the system $\mathbf{A}$ is a
trace (i.e. $\mu(ab)=\mu(ba)$ for all $a,b\in A$), then all of its subsystems
are modular. Much of our work, in particular our main result, Theorem
\ref{st173}, is for the case where $\mu$ is tracial.

Given a system $\mathbf{A}$, carry the state and dynamics of $\mathbf{A}$ over
to $A^{\prime}$ in a natural way using $j$, by defining a state $\mu^{\prime}$
and $\ast$-automorphism $\alpha^{\prime}$ on $A^{\prime}$ by
\[
\mu^{\prime}(b):=\mu\circ j(b)=\left\langle \Omega,b\Omega\right\rangle
\]
and
\[
\alpha^{\prime}(b):=j\circ\alpha\circ j(b)=UbU^{\ast}%
\]
for all $b\in A^{\prime}$, since $UJ=JU$. This defines the system
\[
\mathbf{A}^{\prime}:=(A^{\prime},\mu^{\prime},\alpha^{\prime}).
\]

Since $\mathbf{F}$ is a modular subsystem of $\mathbf{A}$, we obtain a modular
subsystem $\mathbf{\tilde{F}}=\left(  \tilde{F},\tilde{\lambda},\tilde
{\varphi}\right)  $ of $\mathbf{A}^{\prime}$ as follows: Set
\[
\tilde{F}:=j(F)\subset A^{\prime}
\]
(note that by the symbol $\subset$ we mean inclusion, with equality allowed),
and let
\[
\tilde{\lambda}:=\mu^{\prime}|_{\tilde{F}}%
\]
and
\[
\tilde{\varphi}:=\alpha^{\prime}|_{\tilde{F}}.
\]

We can now construct the relatively independent joining of $\mathbf{A}$ and
$\mathbf{A}^{\prime}$ over $\mathbf{F}$:

Since $\mathbf{F}$ is a modular subsystem of $\mathbf{A}$, we know by
Tomita-Takesaki theory (see for example \cite[Theorem IX.4.2]{T}) that we have
a unique conditional expectation
\[
D:A\rightarrow F
\]
such that $\lambda\circ D=\mu$. Then
\[
\tilde{D}:=j\circ D\circ j:A^{\prime}\rightarrow\tilde{F}%
\]
is the unique conditional expectation such that $\tilde{\lambda}\circ\tilde
{D}=\tilde{\nu}$.

Let $P$ be the projection of $H$ onto
\[
H_{F}:=\overline{F\Omega}=\overline{\tilde{F}\Omega},
\]
where the last equality follows from $JH_{F}=H_{F}$. Then
\[
D(a)\Omega=Pa\Omega
\]
for all $a\in A$. This follows from the general construction of such
conditional expectations; see for example \cite[Section 10.2]{S}. Similarly,%
\[
\tilde{D}(b)\Omega=Pb\Omega
\]
for all $b\in A^{\prime}$. Also note that
\[
D\circ\alpha=\alpha\circ D=\varphi\circ D,
\]
and analogously for $\tilde{D}$, since
\[
PU=UP,
\]
as is easily verified from $\alpha(F)=F$.

Define the unital $\ast$-homomorphism
\[
\delta:F\odot\tilde{F}\rightarrow B(H),
\]
to be the linear extension of $F\times\tilde{F}\rightarrow B(H):(a,b)\mapsto
ab$. Defining the \emph{diagonal} state
\[
\Delta_{\lambda}:F\odot\tilde{F}\rightarrow\mathbb{C}%
\]
of $\lambda$ by
\[
\Delta_{\lambda}(c):=\left\langle \Omega,\delta(c)\Omega\right\rangle
\]
for all $c\in F\odot\tilde{F}$, allows us to define a state $\mu\odot
_{\lambda}\mu^{\prime}$ on $A\odot A^{\prime}$ by
\begin{equation}
\mu\odot_{\lambda}\mu^{\prime}:=\Delta_{\lambda}\circ E \label{mumu}%
\end{equation}
where
\[
E:=D\odot\tilde{D}.
\]
Note that $\mu\odot_{\lambda}\mu^{\prime}$ is indeed a joining of $\mathbf{A}$
and $\mathbf{A}^{\prime}$, with the property that $(\mu\odot_{\lambda}%
\mu^{\prime})|_{F\odot\tilde{F}}=\Delta_{\lambda}$, and it is called the
\emph{relatively independent joining of }$\mathbf{A}$\emph{\ and }%
$\mathbf{A}^{\prime}$\emph{\ over }$\mathbf{F}$. We also denote this joining
by
\[
\omega:=\mu\odot_{\lambda}\mu^{\prime}.
\]

We also write
\[
\mathbf{A}\odot_{\mathbf{F}}\mathbf{A}^{\prime}:=(A\odot A^{\prime},\mu
\odot_{\lambda}\mu^{\prime},\alpha\odot\alpha^{\prime})
\]
and call $\mathbf{A}\odot_{\mathbf{F}}\mathbf{A}^{\prime}$ the \emph{relative
product system} (of $\mathbf{A}$ and $\mathbf{A}^{\prime}$ over $\mathbf{F}$),
and it is an example of a $\ast$\emph{-dynamical system}, namely it consists
of a state $\omega=\mu\odot_{\lambda}\mu^{\prime}$ on a unital $\ast$-algebra
$A\odot\tilde{B}$, and a $\ast$-automorphism $\alpha\odot\alpha^{\prime}$ of
$A\odot A^{\prime}$ such that $\omega\circ(\alpha\odot\alpha^{\prime})=\omega
$. However, this is typically not a (W*-dynamical) system as given by
Definition \ref{stelsel}.

The cyclic representation of $A\odot A^{\prime}$ obtained from $\omega$ by the
GNS construction will be denoted by $(H_{\omega},\pi_{\omega},\Omega_{\omega
})$. Since $\omega$ can be extended to a state on the maximal C*-algebraic
tensor product $A\otimes_{m}A^{\prime}$ (see for example \cite[Proposition
4.1]{D2}), we know that $\pi_{\omega}$ is a $\ast$-homomorphism from $A\odot
A^{\prime}$ into the bounded operators $B(H_{\omega})$. Let%
\[
\gamma_{\omega}:A\odot A^{\prime}\rightarrow H_{\omega}:t\mapsto\pi_{\omega
}(t)\Omega_{\omega}.
\]
Furthermore, let $W$ denote the unitary representation of
\[
\tau:=\alpha\odot\alpha^{\prime}%
\]
on $H_{\omega}$, i.e. it is defined as the extension of
\[
W\gamma_{\omega}(t):=\gamma_{\omega}(\tau(t))
\]
for all $t\in A\odot A^{\prime}$.

The cyclic representation obtained from $\omega$, allows us to construct
cyclic representations $(H_{\mu},\pi_{\mu},\Omega_{\omega})$ and
$(H_{\mu^{\prime}},\pi_{\mu^{\prime}},\Omega_{\omega})$ of $(A,\mu)$ and
$(A^{\prime},\mu^{\prime})$ respectively, which are naturally embedded into
$H_{\omega}$ (as in \cite[Construction 2.3]{D1}), by setting
\[
H_{\mu}:=\overline{\gamma_{\omega}(A\otimes1)}\text{ \ \ and \ \ }\pi_{\mu
}(a):=\pi_{\omega}(a\otimes1)|_{H_{\mu}}%
\]
for every $a\in A$, and similarly for $H_{\mu^{\prime}}$ and $\pi_{\mu
^{\prime}}$.

Now we consider cyclic representations of $(F,\lambda)$ and 
$(\tilde{F},\tilde{\lambda})$:

Note that $(H_{F},\delta,\Omega_{\omega})$ is a cyclic representation of
$(F\odot\tilde{F},\Delta_{\lambda})$, since $H_{F}=\overline{\delta
(F\odot\tilde{F})\Omega}$. However, $(\overline{\gamma_{\omega}(F\odot
\tilde{F})},\pi_{\omega}|_{F\odot\tilde{F}},\Omega_{\omega})$ is also a cyclic
representation of $(F\odot\tilde{F},\Delta_{\lambda})$, so these two
representations are unitarily equivalent via the unitary operator
$V:H_{F}\rightarrow\overline{\gamma_{\omega}(F\odot\tilde{F})}$ defined as the
extension of $\delta(t)\Omega\mapsto\gamma_{\omega}(t)$ for $t\in F\odot
\tilde{F}$. Therefore%
\[
H_{\lambda}:=\overline{\gamma_{\omega}(F\otimes1)}=V\overline{\delta
(F\otimes1)\Omega}=VH_{F}=V\overline{\delta(1\otimes\tilde{F})\Omega
}=\overline{\gamma_{\omega}(1\otimes\tilde{F})},
\]
which means that $(F,\lambda)$ and $(\tilde{F},\tilde{\lambda})$ are cyclicly
represented on the same subspace $H_{\lambda}$ of $H_{\omega}$ by
\[
\pi_{\lambda}(f):=\pi_{\mu}(f)|_{H_{\lambda}}\text{ \ \ and \ \ }\pi
_{\tilde{\lambda}}(\tilde{f}):=\pi_{\mu^{\prime}}(\tilde{f})|_{H_{\lambda}}%
\]
for all $f\in F$ and $\tilde{f}\in\tilde{F}$.

\section{Relative weak mixing\label{afdRSV}}

This section presents the definition and two closely related characterizations
of relative weak mixing in terms of ergodic averages. These characterizations
do not yet involve the relative independent joining. An example of relative
weak mixing is also given.

In terms of the notation in the previous section, our main definition is the following:

\begin{definition}
\label{RSVdef}We call a system $\mathbf{A}$ \emph{weakly mixing relative to}
the modular subsystem $\mathbf{F}$ if
\begin{equation}
\lim_{N\rightarrow\infty}\frac{1}{N}\sum_{n=1}^{N}\lambda\left(
|{D(b\alpha^{n}(a))|}^{2}\right)  =0 \label{RSV}%
\end{equation}
for all $a,b\in A$ with $D(a)=D(b)=0$.
\end{definition}

In the classical case this is often also expressed by saying that $\mathbf{A}
$ is a \emph{weakly mixing extension} of $\mathbf{F}$.

\begin{remark}
We recover the absolute case of weak mixing from this definition, by using
$F=\mathbb{C}1_{A}$. Indeed, in this case we have $D(a)=\mu(a)1_{A}$ for all
$a\in A.$ Thus, Eq. (\ref{RSV}) becomes
\[
\lim_{N\rightarrow\infty}\sum_{n=1}^{N}|\mu(ba^{n}(a))|^{2}=0,
\]
or equivalently,
\begin{equation}
\lim_{N\rightarrow\infty}\frac{1}{N}\sum_{n=1}^{N}|\mu(ba^{n}(a))|=0,
\label{eq_abs_weak_mix}%
\end{equation}
for all $a,b\in A$ such that $\mu(a)=\mu(b)=0$.

The reason for this equivalence is that for any bounded sequence $(c_{n})$ of
non-negative real numbers, bounded by $c>0$, say, we have
\[
\frac{1}{N}\sum_{n=1}^{N}c_{n}^{2}\leq\frac{c}{N}\sum_{n=1}^{N}c_{n}%
\]
and, using the Cauchy-Schwarz inequality,
\begin{equation}
\frac{1}{N}\sum_{n=1}^{N}c_{n}\leq\left(  \frac{1}{N}\sum_{n=1}^{N}c_{n}%
^{2}\right)  ^{\frac{1}{2}}. \label{kwad}%
\end{equation}
Therefore,
\[
\lim_{N\rightarrow\infty}\frac{1}{N}\sum_{n=1}^{N}c_{n}^{2}=0\Leftrightarrow
\lim_{N\rightarrow\infty}\frac{1}{N}\sum_{n=1}^{N}c_{n}=0.
\]
Condition (\ref{eq_abs_weak_mix}) in turn is easily seen to be
equivalent to the following:
\[
\lim_{N\rightarrow\infty}\frac{1}{N}\sum_{n=1}^{N}|\mu(ba^{n}(a))-\mu
(b)\mu(a)|=0
\]
for all $a,b\in A$ (simply replace $a$ and $b$ by $a-\mu(a)$ and $b-\mu(b)$
respectively in Eq. (\ref{eq_abs_weak_mix}). This is the standard definition
of weak mixing.
\end{remark}

Our first simple characterization of relative weak mixing, which will also be
used in the proof of this paper's main theorem in the next section, is the following:

\begin{proposition}
\label{prop140}The system $\mathbf{A}$ is weakly mixing relative to
$\mathbf{F}$ if and only if
\begin{equation}
\lim_{N\rightarrow\infty}\frac{1}{N}\sum_{n=1}^{N}\lambda\left(  |D\left(
b\alpha^{n}(a)\right)  -D(b)D(\alpha^{n}(a))|^{2}\right)  =0 \label{RSVkar1}%
\end{equation}
for all $a,b\in A$.
\end{proposition}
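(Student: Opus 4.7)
\medskip
\noindent\textbf{Proof plan for Proposition \ref{prop140}.}

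The plan is to show that the two ergodic averages agree by exhibiting a clean algebraic identity between the integrands. Given arbitrary $a,b\in A$, I would split off the ``conditional means'' by writing
\[
a_{0}:=a-D(a)\quad\text{and}\quad b_{0}:=b-D(b),
\]
so that $D(a_{0})=D(b_{0})=0$, while $D(a),D(b)\in F$. The reduction to Definition \ref{RSVdef} then comes down to verifying the pointwise identity
\[
D(b\alpha^{n}(a))-D(b)D(\alpha^{n}(a))=D(b_{0}\alpha^{n}(a_{0}))
\]
for every $n\in\mathbb{Z}$.

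To prove this identity, I would expand $D(b\alpha^{n}(a))$ using bilinearity and the decompositions $b=b_{0}+D(b)$ and $\alpha^{n}(a)=\alpha^{n}(a_{0})+\alpha^{n}(D(a))$. Two ingredients already collected in Section \ref{afdROB} do all the work: first, $\alpha(F)=F$ (so that $\alpha^{n}(D(a))\in F$ and $D\circ\alpha^{n}=\alpha^{n}\circ D$), and second, the conditional expectation property $D(fxg)=fD(x)g$ for $f,g\in F$ and $x\in A$. Using these, the two cross terms vanish because either $D(a_{0})=0$ or $D(b_{0})=0$ gets pulled out, and the fully conditioned term collapses to $D(b)\alpha^{n}(D(a))=D(b)D(\alpha^{n}(a))$, leaving only $D(b_{0}\alpha^{n}(a_{0}))$ as claimed.

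With the identity in hand, both directions of the proposition are immediate. For the ``only if'' direction, if $\mathbf{A}$ is weakly mixing relative to $\mathbf{F}$, then since $D(a_{0})=D(b_{0})=0$, the average of $\lambda(|D(b_{0}\alpha^{n}(a_{0}))|^{2})$ tends to zero, and substituting the identity gives (\ref{RSVkar1}). For the ``if'' direction, given $a,b\in A$ with $D(a)=D(b)=0$, we have $a_{0}=a$ and $b_{0}=b$, so (\ref{RSVkar1}) specialises directly to (\ref{RSV}).

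The only nontrivial step is the algebraic identity, and that is essentially routine once one remembers that $F$ is $\alpha$-invariant, so $D$ commutes with $\alpha^{n}$ and $F$-valued factors can be pulled out of $D$. No approximation or limit argument is needed beyond a termwise substitution inside the Cesàro average, so there is no real analytic obstacle.
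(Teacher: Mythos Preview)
Your proposal is correct and follows essentially the same approach as the paper: you introduce $a_{0}=a-D(a)$, $b_{0}=b-D(b)$, establish the identity $D(b\alpha^{n}(a))-D(b)D(\alpha^{n}(a))=D(b_{0}\alpha^{n}(a_{0}))$, and read off both directions. The paper's proof simply asserts this identity without the expansion you sketch, but the strategy is identical.
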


\begin{proof}
Assume that $\mathbf{A}$ is weakly mixing relative to $\mathbf{F}$. For any
$a,b\in A$, setting $a_{0}:=a-D(a)$ and $b_{0}:=b-D(b)$, we have
$D(a_{0})=D(b_{0})=0$ and
\[
D(b_{0}\alpha^{n}(a_{0}))=D\left(  b\alpha^{n}(a)\right)  -D(b)D(\alpha
^{n}(a)).
\]
Hence Eq. (\ref{RSVkar1}) follows from Definition \ref{RSVdef}. The converse
is trivial by assuming either $D(a)=0$ or $D(b)=0$.
\end{proof}

This gives us variations of this characterization as well, for example,
$\mathbf{A}$ is weakly mixing relative to $\mathbf{F}$ if and only if Eq.
(\ref{RSV}) holds for all $a,b\in A$ with $D(a)=0$.

Next we are going to show that when $\mu$ is a trace, Definition \ref{RSVdef}
is equivalent to \cite[Definition 3.7]{AET}. To do this, we use the basic
construction in a similar way to how it was used in \cite[Sections 3 and
4]{AET} to prove their structure theorem.

The von Neumann algebra generated by $A$ and $P$ will be denoted by $\bar{A}$
and is referred to as the \emph{basic construction}. When $\mu$ is a trace,
then from it we obtain a faithful semifinite normal tracial weight $\bar{\mu
}:\bar{A}^{+}\rightarrow\lbrack0,\infty]$. It is also defined and tracial on
the strongly dense $\ast$-subalgebra $APA:=\operatorname{span}\{aPb:a,b\in
A\}$ of $\bar{A}$ via the equation
\[
\bar{\mu}(aPb)=\mu(ab).
\]
For more on the basic construction and the trace $\bar{\mu}$, see
\cite[Chapter 4]{SS}. Some of the early literature on this topic can be found
in \cite{Sk}, \cite{C} and \cite{J}.

We can extend the dynamics of $\alpha$ to $\bar{A}$ using the equation
\[
\bar{\alpha}(a)=UaU^{\ast}
\]
for $a\in\bar{A}$. That $\bar{\mu}\circ\bar{\alpha}=\bar{\mu}$, is
not elementary, but it is a simple consequence of the following result:

\begin{theorem}
{\cite[Theorem 4.3.11]{SS}} \label{thm_miminum_finite_lifted}Let
$\varphi$ be a weight on $\bar{A}$ with $\varphi=\bar{\mu}$ on $(APA)^{+}$. If
$\varphi$ is normal, then $\varphi=\bar{\mu}$.
\end{theorem}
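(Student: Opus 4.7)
The plan is to exploit normality of both $\varphi$ and $\bar{\mu}$ together with the strong-operator density of $APA$ in $\bar{A}$. A normal weight is $\sigma$-weakly lower semicontinuous and order-continuous along bounded increasing nets, and both features will play a role.

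First I would reduce to bounded positive elements by spectral truncation. For $x \in \bar{A}^+$, let $e_n$ be the spectral projection of $x$ onto $[0,n]$. Then $x_n := x^{1/2} e_n x^{1/2}$ lies in $\bar{A}^+$, is bounded, and satisfies $x_n \uparrow x$ in the strong order. Normality of both weights then gives $\varphi(x) = \sup_n \varphi(x_n)$ and $\bar{\mu}(x) = \sup_n \bar{\mu}(x_n)$, so it suffices to prove $\varphi = \bar{\mu}$ on bounded elements of $\bar{A}^+$.

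For bounded $x \in \bar{A}^+$, the natural attempt is to apply Kaplansky's density theorem to $APA$ (which is strongly dense in $\bar{A}$), obtaining a norm-bounded net $(z_\alpha) \subset APA$ with $z_\alpha \to x^{1/2}$ $\sigma$-strongly. Then $z_\alpha^* z_\alpha \in (APA)^+$ converges $\sigma$-strongly to $x$, and by hypothesis $\varphi(z_\alpha^* z_\alpha) = \bar{\mu}(z_\alpha^* z_\alpha)$ for every $\alpha$. Lower semicontinuity yields
$$\varphi(x) \le \liminf_\alpha \bar{\mu}(z_\alpha^* z_\alpha) \quad \text{and} \quad \bar{\mu}(x) \le \liminf_\alpha \varphi(z_\alpha^* z_\alpha),$$
which are one-sided bounds rather than the equality we seek.

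The main obstacle is closing the gap between these LSC inequalities and the desired equality: Kaplansky provides norm control but not monotone approximation, so order-continuity of the weights cannot be invoked directly. To finish, I would look for a refinement producing an increasing net in $(APA)^+$ whose supremum is $x$, using the specific structure of the basic construction --- in particular that the Jones projection $P$ lies in $APA$ (take $a=b=1$) and that multiplication by elements of $A$ on either side of $P$ gives access to enough spectral data to build approximations from below. Establishing this order-density of $(APA)^+$ in $\bar{A}^+$ from below is the technical heart of the statement and, I expect, is where the proof in \cite{SS} does its real work.
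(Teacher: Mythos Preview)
The paper does not prove this statement; it is quoted from \cite[Theorem 4.3.11]{SS} and used as a black box (specifically, to deduce $\bar{\mu}\circ\bar{\alpha}=\bar{\mu}$ in the paragraph following it). There is no proof in the paper to compare your proposal against.

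On the proposal itself: you have correctly diagnosed, and openly flagged, the gap. The lower-semicontinuity step yields only
\[
\varphi(x)\le\liminf_\alpha\bar{\mu}(z_\alpha^*z_\alpha)
\quad\text{and}\quad
\bar{\mu}(x)\le\liminf_\alpha\bar{\mu}(z_\alpha^*z_\alpha),
\]
and nothing forces $\liminf_\alpha\bar{\mu}(z_\alpha^*z_\alpha)$ to coincide with either quantity, so no equality follows. Kaplansky gives norm control but not order control, and normality of the weights cannot be invoked along a merely $\sigma$-strongly convergent net. What you say you would need---an increasing net in $(APA)^+$ with supremum $x$---is precisely the substantive content of the cited theorem, and it relies on structural facts about the basic construction developed earlier in \cite{SS} (the pull-down map and the description of the trace ideal of $\bar{\mu}$). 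As written, the proposal is an accurate outline of the difficulty rather than a proof.
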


Next, similar to the case of $U$, we have a unitary operator $\bar{U}:\bar
{H}\rightarrow\bar{H}$ representing $\bar{\alpha}$ on the Hilbert space
$\bar{H}$ arising from the GNS construction for $(\bar{A},\bar{\mu})$, which
is described for example in \cite[Section 7.5]{KadRing2}. We consider the
quotient space $\mathcal{N}_{\bar{\mu}}\slash  N_{\bar{\mu}}$, where
$\mathcal{N}_{\bar{\mu}}:=\{x\in\bar{A}:\bar{\mu}(x^{\ast}x)<\infty\}$ and
$N_{\bar{\mu}}:=\{x\in\bar{A}:\bar{\mu}(x^{\ast}x)=0\}$. The quotient map
$\mathcal{N}_{\bar{\mu}}\rightarrow\mathcal{N}_{\bar{\mu}}\slash  N_{\bar{\mu
}}$ sends elements $x\in\mathcal{N}_{\bar{\mu}}$ to elements
\[
\hat{x}:=x+N_{\bar{\mu}}.
\]
We may endow $\mathcal{N}_{\bar{\mu}}\slash  N_{\bar{\mu}}$ with the inner
product ${\langle}\hat{x},\hat{y}{\rangle}_{\bar{\mu}}:=\bar{\mu}(x^{\ast}y)$
for all $x,y\in\mathcal{N}_{\bar{\mu}}$. Completing $\mathcal{N}_{\bar{\mu}%
}\slash  N_{\bar{\mu}}$ in the norm ${\lVert}\hat{x}{\rVert}_{\bar{\mu}%
}:=\sqrt{{\langle\hat{x},\hat{x}\rangle}_{\bar{\mu}}}$ yields a Hilbert space
which we denote by $\bar{H}$. Since $\bar{\mu}\circ\bar{\alpha}=\bar{\mu}$,
we can define the unitary $\bar{U}:\bar{H}\rightarrow\bar{H}$ as the extension
of the map
\[
\bar{U}:\mathcal{N}_{\bar{\mu}}/N_{\bar{\mu}}\rightarrow\mathcal{N}_{\bar{\mu
}}/N_{\bar{\mu}},
\]
given by
\[
\bar{U}\hat{x}=\widehat{\alpha(x)}.
\]

In order to prove the equivalence with \cite[Definition 3.7]{AET}, we need
three lemmas which we present now. The first is just a slight variation of the
calculations that appear at the beginning of the proof of \cite[Proposition
3.8]{AET}:

\begin{lemma}
\label{lem_conversion}Assume that $\mu$ is a trace. Let $a,b\in A.$ Then
\[
\bar{\mu}(b^{\ast}Pb\bar{\alpha}^{n}(aPa^{\ast}))=\lambda(|{D(b\alpha
^{n}(a))|}^{2}).
\]

\begin{proof}
$\bar{\mu}(b^{\ast}Pb\bar{\alpha}^{n}(aPa^{\ast}))=\bar{\mu}(D(c)PD(c^{\ast
}))=\mu(D(c)D(c^{\ast}))$ in terms of $c:=b\alpha^{n}(a)$.
\end{proof}
\end{lemma}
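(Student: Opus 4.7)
The plan is to peel off the occurrences of $P$ using three standard identities from the Jones basic construction for $F \subset A$: first, $PxP = D(x)P$ for $x \in A$; second, $\bar\mu(xPy) = \mu(xy)$ for $x, y \in A$; and third, tracial cyclicity of $\bar\mu$ on the $\ast$-subalgebra $APA$. The only preliminary is $\bar\alpha(P) = UPU^\ast = P$, which is immediate from $PU = UP$ noted in Section \ref{afdROB}; hence $\bar\alpha^n(aPa^\ast) = \alpha^n(a)P\alpha^n(a)^\ast$.

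Setting $c := b\alpha^n(a)$, I would first use tracial cyclicity to move $b^\ast$ to the right, giving $\bar\mu(PcPc^\ast)$, then collapse $PcP$ to $D(c)P$ via the first identity, and apply $\bar\mu(xPy) = \mu(xy)$ (with $x = D(c) \in F \subset A$ and $y = c^\ast \in A$) to reach $\mu(D(c)c^\ast)$. Since $D(c) \in F$, the module property $D(fx) = fD(x)$ together with $\lambda \circ D = \mu$ converts this into $\lambda(D(c)D(c^\ast))$, and the trace property of $\lambda$ combined with $D(c^\ast) = D(c)^\ast$, so that $|D(c)|^2 = D(c^\ast)D(c)$, finishes the argument.

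The one subtlety worth noting is that $\bar\mu$ is merely a semifinite tracial weight rather than a state, so at each step one should confirm that the intermediate products lie in $APA$, the domain on which cyclicity and the formula $\bar\mu(xPy) = \mu(xy)$ are genuinely available. This check is immediate from the form of the terms and poses no real obstacle; the substantive content of the lemma is just the translation between the basic-construction trace of a $P$-sandwich and the $F$-state of the squared conditional expectation.
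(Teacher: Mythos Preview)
Your proof is correct and follows essentially the same route as the paper's: both use tracial cyclicity of $\bar\mu$ on $APA$, the collapse $PxP = D(x)P$, and the formula $\bar\mu(xPy)=\mu(xy)$ to reduce the left-hand side to $\mu(D(c)D(c^{\ast}))=\lambda(|D(c)|^{2})$ with $c=b\alpha^{n}(a)$. The paper's one-line argument merely records the intermediate value $\bar\mu(D(c)PD(c^{\ast}))$ (collapsing both $P$-sandwiches at once rather than just one), but the ingredients and logic are identical to yours.
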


The following is a version of the van der Corput lemma:

\begin{lemma}
{\cite[Lemma 2.12.7]{Tao}}\label{lem_vdC} Let $(v_{n})$ be a bounded sequence
of vectors in a Hilbert space $\mathfrak{H}$ such that
\begin{equation}
\lim_{M\rightarrow\infty}\frac{1}{M}\sum_{h=1}^{M}\left(  \limsup
_{N\rightarrow\infty}\frac{1}{N}\sum_{n=1}^{N}\langle v_{n},v_{n+h}%
\rangle\right)  =0. \label{eq_vdC_suff}%
\end{equation}
Then
\[
\lim_{N\rightarrow\infty}\frac{1}{N}\sum_{n=1}^{N}v_{n}=0.
\]
\end{lemma}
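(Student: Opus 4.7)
The plan is to use the classical double-averaging (van der Corput) trick. Set $c := \sup_n \|v_n\|$ and $S_N := \frac{1}{N}\sum_{n=1}^N v_n$; the aim is to show $\|S_N\| \to 0$. For each auxiliary window size $H \geq 1$, introduce
\[
T_{N,H} := \frac{1}{N}\sum_{n=1}^{N}\frac{1}{H}\sum_{h=0}^{H-1} v_{n+h}.
\]
A reindexing of the inner sum shows that $S_N$ and $T_{N,H}$ differ only in roughly $H$ boundary terms, so $\|S_N - T_{N,H}\| \leq 2cH/N$. Hence, for each fixed $H$, $\limsup_{N\to\infty}\|S_N\| = \limsup_{N\to\infty}\|T_{N,H}\|$.

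Next, Cauchy-Schwarz (equivalently, convexity of $\|\cdot\|^2$) applied to the outer average over $n$ yields
\[
\|T_{N,H}\|^2 \leq \frac{1}{N}\sum_{n=1}^{N}\left\|\frac{1}{H}\sum_{h=0}^{H-1}v_{n+h}\right\|^2 = \frac{1}{H^2}\sum_{h,h'=0}^{H-1}\frac{1}{N}\sum_{n=1}^{N}\langle v_{n+h}, v_{n+h'}\rangle.
\]
The diagonal contribution ($h=h'$) is bounded by $c^2/H$. For the off-diagonal terms, the shifted Cesaro average $\frac{1}{N}\sum_n \langle v_{n+h}, v_{n+h'}\rangle$ differs from $\frac{1}{N}\sum_n \langle v_n, v_{n+(h'-h)}\rangle$ by at most $O(Hc^2/N)$ uniformly for $h, h' < H$, hence they share the same limsup in $N$. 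Pairing $(h,h')$ with $(h',h)$ (so the double sum is manifestly real via Hermitian symmetry) and setting $k = |h'-h|$, one arrives at
\[
\limsup_{N\to\infty}\|S_N\|^2 \leq \frac{c^2}{H} + \frac{2}{H}\sum_{k=1}^{H-1}\mathrm{Re}\left(\limsup_{N\to\infty}\frac{1}{N}\sum_{n=1}^{N}\langle v_n, v_{n+k}\rangle\right),
\]
where I used the crude count $H - k \leq H$ on the number of index pairs at gap $k$.

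Letting $H \to \infty$, the first term vanishes, and the second term vanishes by the assumption (\ref{eq_vdC_suff}). Therefore $\limsup_N \|S_N\|^2 = 0$, which proves $S_N \to 0$ in norm.

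The main obstacle I foresee is the careful bookkeeping around the shift-invariance of the Cesaro averages, together with the precise interpretation of $\limsup$ applied to a complex-valued sequence. The cleanest resolution is to exploit $\langle v_{n+h'}, v_{n+h}\rangle = \overline{\langle v_{n+h}, v_{n+h'}\rangle}$ to write the off-diagonal double sum as twice a real part, reducing everything to real-valued limsups to which (\ref{eq_vdC_suff}) applies verbatim. The uniform boundedness of $(v_n)$ is the essential input that keeps the $O(H/N)$ boundary discrepancies negligible in $n$, justifying the order of iterated limits (sending $N\to\infty$ first with $H$ fixed, then $H\to\infty$).
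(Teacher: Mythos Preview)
The paper does not supply its own proof of this lemma: it is quoted verbatim from \cite[Lemma 2.12.7]{Tao} and invoked as a black box in the proof of Lemma~\ref{lem_one_implies_two_mixing}. Your argument is the standard van der Corput double-averaging proof (introduce a window of length $H$, compare $S_N$ to the windowed average $T_{N,H}$ up to $O(H/N)$ boundary terms, apply Cauchy--Schwarz to the outer average, expand the square, and let $N\to\infty$ then $H\to\infty$), which is exactly the proof one finds in Tao's text. So there is no discrepancy to report: your route coincides with the source the paper defers to.

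One small point worth tightening in your write-up: the expression $\mathrm{Re}\bigl(\limsup_N \frac{1}{N}\sum_n \langle v_n,v_{n+k}\rangle\bigr)$ is not well formed, since $\limsup$ of a complex sequence is undefined. What you actually obtain from subadditivity is $\limsup_N \mathrm{Re}\,\frac{1}{N}\sum_n \langle v_n,v_{n+k}\rangle$, and the hypothesis \eqref{eq_vdC_suff} must correspondingly be read with the $\limsup$ applied to real quantities (indeed, in the application in Lemma~\ref{lem_one_implies_two_mixing} the inner products are bounded by non-negative reals, so no ambiguity arises there). You correctly flagged this issue in your final paragraph; just make sure the displayed inequality reflects the resolution you describe.
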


Putting these two lemmas together, we obtain the following:

\begin{lemma}
\label{lem_one_implies_two_mixing} Assume $\mu$ is a trace. Let $a\in A$
satisfy
\begin{equation}
\lim_{N\rightarrow\infty}\frac{1}{N}\sum_{n=1}^{N}\lambda(|{D(a}^{\ast}%
{\alpha^{n}(a))|}^{2})=0. \label{eq_one_weak_mixing}%
\end{equation}
Then, for all $b\in A,$ we have
\begin{equation}
\lim_{N\rightarrow\infty}\frac{1}{N}\sum_{n=1}^{N}\lambda(|{D(b\alpha
^{n}(a))|}^{2})=0. \label{eq_two_weak_mixing}%
\end{equation}

\begin{proof}
Let $x:=aPa^{\ast}$ and $y:=b^{\ast}Pb$. Observe that $\bar{\mu}(y\bar{\alpha
}^{n}(x))\geq0$ by Lemma \ref{lem_conversion}, and
\begin{align}
\frac{1}{N}\sum_{n=1}^{N}\bar{\mu}(y\bar{\alpha}^{n}(x))^{2}  &  =\frac{1}
{N}\sum_{n=1}^{N}\langle\hat{y},\bar{U}^{n}\hat{x}\rangle^{2}=\left\langle
\hat{y},\frac{1}{N}\sum_{n=1}^{N}{\langle\hat{y},\bar{U}^{n}\hat{x}\rangle
}\bar{U}^{n}\hat{x}\right\rangle \nonumber\\
&  \leq{\left\|  \bar{y}\right\|  }{\left\|  \frac{1}{N}\sum_{n=1}^{N}
{\langle\hat{y},\bar{U}^{n}\hat{x}\rangle}\bar{U}^{n}\hat{x}\right\|  }
\label{eq_prelim_calc}
\end{align}

Let $v_{n}:={\langle\hat{y},\bar{U}^{n}\hat{x}\rangle}\bar{U}^{n}\hat{x}$, 
for every $n\in\mathbb{N}$. Clearly, the sequence $(v_{n})$ is bounded.
We can estimate, for every $n,h\in\mathbb{N}$,
\[
|\left\langle v_{n},v_{n+h}\right\rangle |\leq\left\|  \hat{x}\right\|
^{2}\left\|  \hat{y}\right\|  ^{2}\bar{\mu}(x\bar{\alpha}^{h}(x)).
\]
This, together with Lemma \ref{lem_conversion} (with $b=a^{\ast}$) and our
assumption Eq. (\ref{eq_one_weak_mixing}), imply Eq. (\ref{eq_vdC_suff}).
Thus, from Lemma \ref{lem_vdC}, we have $\lim_{N\rightarrow\infty}\frac{1}%
{N}\sum_{n=1}^{N}v_{n}=0$. Therefore, from (\ref{eq_prelim_calc}), we obtain
\[
\lim_{N\rightarrow\infty}\frac{1}{N}\sum_{n=1}^{N}\bar{\mu}(y\bar{\alpha}%
^{n}(x))^{2}=0.
\]
Consequently, from (\ref{kwad}),
\[
\lim_{N\rightarrow\infty}\sum_{n=1}^{N}\bar{\mu}(y\bar{\alpha}^{n}(x))=0.
\]
Again by Lemma \ref{lem_conversion}, we are done.
\end{proof}
\end{lemma}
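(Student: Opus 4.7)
The plan is to lift the problem into the basic construction $\bar{A}$ with its tracial weight $\bar{\mu}$ and GNS space $\bar{H}$, where Lemma \ref{lem_conversion} converts both the hypothesis and the conclusion into statements about $\bar{\mu}$. Setting $x := aPa^{\ast}$ and $y := b^{\ast}Pb$ (both positive, hence self-adjoint), Lemma \ref{lem_conversion} translates the hypothesis (\ref{eq_one_weak_mixing}) into
\[
\frac{1}{N}\sum_{n=1}^{N} \bar{\mu}(x\bar{\alpha}^{n}(x)) \longrightarrow 0,
\]
and reduces the desired conclusion (\ref{eq_two_weak_mixing}) to showing
\[
\frac{1}{N}\sum_{n=1}^{N} \bar{\mu}(y\bar{\alpha}^{n}(x)) \longrightarrow 0.
\]
In the GNS picture each term $\bar{\mu}(y\bar{\alpha}^{n}(x)) = \langle \hat{y}, \bar{U}^{n}\hat{x}\rangle$ is a nonnegative real number.

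To invoke the van der Corput lemma, I would first bound the squared Cesàro average via Cauchy--Schwarz in $\bar{H}$. Writing $v_{n} := \langle \hat{y}, \bar{U}^{n}\hat{x}\rangle \, \bar{U}^{n}\hat{x}$, the real-valuedness of $\langle \hat{y}, \bar{U}^{n}\hat{x}\rangle$ gives
\[
\frac{1}{N}\sum_{n=1}^{N} \bar{\mu}(y\bar{\alpha}^{n}(x))^{2} = \left\langle \hat{y}, \frac{1}{N}\sum_{n=1}^{N} v_{n} \right\rangle \leq \|\hat{y}\| \cdot \left\| \frac{1}{N}\sum_{n=1}^{N} v_{n} \right\|,
\]
so it suffices to show $\frac{1}{N}\sum_{n} v_{n} \to 0$ in $\bar{H}$. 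To verify the van der Corput hypothesis (\ref{eq_vdC_suff}) I would compute
\[
\langle v_{n}, v_{n+h}\rangle = \overline{\langle \hat{y}, \bar{U}^{n}\hat{x}\rangle}\,\langle \hat{y}, \bar{U}^{n+h}\hat{x}\rangle\,\langle \hat{x}, \bar{U}^{h}\hat{x}\rangle,
\]
applying Cauchy--Schwarz to the two scalar factors involving $\hat{y}$ to obtain
\[
|\langle v_{n}, v_{n+h}\rangle| \leq \|\hat{x}\|^{2}\|\hat{y}\|^{2}\,\bar{\mu}(x\bar{\alpha}^{h}(x)),
\]
where the identity $\langle \hat{x}, \bar{U}^{h}\hat{x}\rangle = \bar{\mu}(x^{\ast}\bar{\alpha}^{h}(x)) = \bar{\mu}(x\bar{\alpha}^{h}(x))$ uses traciality of $\bar{\mu}$, the unitarity of $\bar{U}$, and self-adjointness of $x$. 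The translated hypothesis then immediately yields (\ref{eq_vdC_suff}).

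Applying van der Corput gives $\frac{1}{N}\sum_{n} v_{n} \to 0$, whence $\frac{1}{N}\sum_{n} \bar{\mu}(y\bar{\alpha}^{n}(x))^{2} \to 0$. Since the sequence $(\bar{\mu}(y\bar{\alpha}^{n}(x)))$ is nonnegative and bounded, inequality (\ref{kwad}) upgrades this to $\frac{1}{N}\sum_{n} \bar{\mu}(y\bar{\alpha}^{n}(x)) \to 0$, and a final application of Lemma \ref{lem_conversion} converts this back to (\ref{eq_two_weak_mixing}).

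The main obstacle is the estimate on $|\langle v_{n}, v_{n+h}\rangle|$, which must simultaneously extract a factor depending only on $h$ and identify that factor as something the hypothesis controls. This requires combining Cauchy--Schwarz on the two scalar inner products involving $\hat{y}$, the unitarity of $\bar{U}$ to collapse $\langle \bar{U}^{n}\hat{x}, \bar{U}^{n+h}\hat{x}\rangle$ to $\langle \hat{x}, \bar{U}^{h}\hat{x}\rangle$, and the traciality of $\bar{\mu}$ together with $x = x^{\ast}$ to express that scalar as the positive quantity $\bar{\mu}(x\bar{\alpha}^{h}(x))$. Once this estimate is available, the other ingredients (Cauchy--Schwarz in $\bar{H}$, van der Corput, and the square-versus-linear comparison (\ref{kwad})) slot in routinely.
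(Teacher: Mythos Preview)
Your proposal is correct and follows essentially the same route as the paper's proof: the same choice of $x=aPa^{\ast}$ and $y=b^{\ast}Pb$, the same vectors $v_{n}=\langle\hat{y},\bar{U}^{n}\hat{x}\rangle\,\bar{U}^{n}\hat{x}$, the same estimate $|\langle v_{n},v_{n+h}\rangle|\leq\|\hat{x}\|^{2}\|\hat{y}\|^{2}\,\bar{\mu}(x\bar{\alpha}^{h}(x))$, and the same chain van der Corput $\Rightarrow$ Ces\`aro of squares $\Rightarrow$ Ces\`aro via (\ref{kwad}) $\Rightarrow$ Lemma~\ref{lem_conversion}. Your write-up is in fact slightly more explicit than the paper in justifying the intermediate steps (real-valuedness of the inner products, the unitarity/self-adjointness reasons behind the key estimate).
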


This finally implies the following characterization of relative weak mixing
(which in \cite{AET} was used as the definition):

\begin{proposition}
\label{prop_equiv_one_implies_two_mixing} Assume that $\mu$ is a trace. Then
$\mathbf{A}$ is weakly mixing relative to the subsystem $\mathbf{F}$ if and
only if
\[
\lim_{N\rightarrow\infty}\frac{1}{N}\sum_{n=1}^{N}\lambda(|{D(a}^{\ast}
{\alpha^{n}(a))|}^{2})=0,
\]
for all $a\in A$ such that $D(a)=0$.
\end{proposition}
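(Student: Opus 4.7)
The plan is to reduce both directions of the equivalence to facts already proved in the excerpt. The forward implication will come by specialization of Definition \ref{RSVdef}; the reverse will be an immediate application of Lemma \ref{lem_one_implies_two_mixing}, where essentially all of the analytic content lives.

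First I would handle $(\Rightarrow)$. Suppose $\mathbf{A}$ is weakly mixing relative to $\mathbf{F}$, and let $a\in A$ with $D(a)=0$. The conditional expectation $D$ is $*$-preserving, so $D(a^{*})=D(a)^{*}=0$, which means the pair $(a^{*},a)$ is admissible in (\ref{RSV}). Substituting $b:=a^{*}$ into Definition \ref{RSVdef} produces exactly the one-variable condition, ending this direction.

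For $(\Leftarrow)$, assume the one-variable condition for all $a\in A$ with $D(a)=0$, and pick arbitrary $a,b\in A$ with $D(a)=D(b)=0$. Since $D(a)=0$, the hypothesis provides (\ref{eq_one_weak_mixing}) for $a$, and then Lemma \ref{lem_one_implies_two_mixing} gives (\ref{eq_two_weak_mixing}) for \emph{every} $b\in A$; specializing to the $b$ at hand (which happens to satisfy $D(b)=0$, though this is not even needed) yields (\ref{RSV}). Verifying (\ref{RSV}) for all such $(a,b)$ is precisely relative weak mixing.

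I do not expect any real obstacle at this stage. The substantive work — the van der Corput estimate, the passage through the basic construction $\bar{A}$ with its tracial weight $\bar{\mu}$, and the reduction formula of Lemma \ref{lem_conversion} — has already been packaged inside Lemma \ref{lem_one_implies_two_mixing}. The only conceptual point worth flagging in the writeup is that the one-variable condition is strong enough to imply the two-variable one precisely because $b^{*}Pb$ and $aPa^{*}$ are positive elements of $\bar{A}$, which is what allowed the van der Corput argument to go through in the lemma. Beyond that, the proposition is a short corollary.
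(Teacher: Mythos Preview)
Your proposal is correct and matches the paper's approach exactly. The paper presents this proposition without a written proof, treating it as an immediate consequence of Lemma~\ref{lem_one_implies_two_mixing} (with the forward direction being the trivial specialization $b=a^{\ast}$ in Definition~\ref{RSVdef}), precisely as you outline.
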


\begin{remark}
Essential to the proof of the commutative version of Lemma
\ref{lem_one_implies_two_mixing} (outlined in \cite[Exercise 2.14.1]{Tao}), is
a conditional version of the Cauchy-Schwarz inequality in terms of the
conditional expectation $\mathbb{E}$:
\begin{equation*}
|\mathbb{E}(\bar{f}g|Y)|\leq{\lVert}\mathbb{E}{(|f|^{2}|Y)\rVert}_{L^{2}
(X|Y)}{\lVert}\mathbb{E}{(|g|^{2}|Y)\rVert}_{L^{2}(X|Y)}
\label{eq_cond_cauchy}
\end{equation*}
where $f,g$ belong to the $L^{\infty}(Y)$-module $L^{2}(X|Y)=\{h\in
L^{2}(X):\mathbb{E}(|h|^{2}|Y)\in L^{\infty}(Y)\}$ (\cite[Section 2.13]{Tao}).
In the non-commutative case, however, our approach above allows us to simplify 
the argument and avoid some snags. We essentially used a non-commutative 
translation of the proof of the absolute case \cite[Corollary 2.12.8]{Tao}, 
but in terms of the basic construction, to prove 
Lemma \ref{lem_one_implies_two_mixing}.
\end{remark}

Before we get to an example, we note a few simple general facts:

Firstly, $D(a)=0$ for $a\in A$, if and only if $a$ is of the form $a=c-D(c)$
for some $c\in A$.

Secondly,
\[
\lambda(D(\alpha^{n}(a^{\ast})b^{\ast})D\left(  b\alpha^{n}(a))\right)
=\left\|  PbU^{n}a\Omega\right\|  ^{2}%
\]
for all $a,b\in A$, by a straightforward calculation. If, in addition
$\lambda$ is a trace, then we have%
\begin{equation}
\left\|  PbU^{n}a\Omega\right\|  =\left\|  PU^{n}a^{\ast}U^{-n}b^{\ast}%
\Omega\right\|  \label{normVirSp}%
\end{equation}
for all $a,b\in A$, by a similar calculation for $\lambda(D\left(  b\alpha
^{n}(a))D(\alpha^{n}(a^{\ast})b^{\ast})\right)  $.

To show that relative weak mixing is indeed relevant in noncommutative
W*-dynamical systems, in particular for non-ergodic systems which are not
asymptotically abelian, we provide the following example:

\begin{example}
Let $G$ be any discrete group, and let $A$ be the group von Neumann algebra
obtained from it. In other words, $A$ is the von Neumann algebra on
$H=l^{2}(G)$ generated by the following set of unitary operators:%
\[
\{l(g):g\in G\}
\]
where $l$ is the left regular representation of $G$, i.e. the unitary
representation of $G$ on $H$ with each $l(g):H\rightarrow H$ given by
\[
\lbrack l(g)f](h)=f(g^{-1}h)
\]
for all $f\in H$ and $g,h\in G$. Equivalently,%
\[
l(g)\delta_{h}=\delta_{gh}%
\]
for all $g,h\in G$, where $\delta_{g}\in H$ is defined by $\delta_{g}(g)=1$
and $\delta_{g}(h)=0$ for $h\neq g$. Setting
\[
\Omega:=\delta_{1}%
\]
where $1\in G$ denotes the identity of $G$, we can define a faithful normal
trace $\mu$ on $A$ by%
\[
\mu(a):=\left\langle \Omega,a\Omega\right\rangle
\]
for all $a\in A$. Then $(H,\operatorname{id}_{A},\Omega)$ is the cyclic
representation of $(A,\mu)$.

Given any automorphism $T$ of $G$, we define a unitary operator on $H$ by
\[
Uf:=f\circ T^{-1}%
\]
for all $f\in H$. From this we obtain a $\ast$-automorphism of $A$ by setting
\[
\alpha(a):=UaU^{\ast}%
\]
for all $a\in A$, which satisfies $\alpha(l(g))=l(T(g))$ for all $g\in G$.
Then $\mathbf{A}=(A,\mu,\alpha)$ is a system.

We define a subsystem $\mathbf{F}=(F,\lambda,\varphi)$ of $\mathbf{A}$ by
letting $F$ be the von Neumann subalgebra of $A$ generated by%
\[
\{l(g):g\in K\}
\]
where $K:=\{g\in G:T^{\mathbb{N}}(g)$ is finite$\}$. Here $T^{\mathbb{N}%
}(g):=\{T(g),T^{2}(g),T^{3}(g),...\}$ is the \emph{orbit} of $g$. Furthermore
$\lambda:=\mu|_{F}$ and $\varphi:=\alpha|_{A}$. (See \cite[Section 3]{D2} for
more background on this type of system in the context of W*-algebraic ergodic
theory and joinings.)

We can find $D$ explicitly in this case: The projection $P$ above is now the
projection of $H$ onto the Hilbert subspace spanned by $\{\delta_{g}:g\in
K\}$. Therefore we have
\begin{equation}
D(l(g))=\left\{
\begin{array}
[c]{c}%
l(g)\text{ for }g\in K\\
0\text{ for }g\notin K
\end{array}
\right.  \text{ } \label{Dformule}
\end{equation}
for all $g\in G$.

Note that the unital $\ast$-algebra generated by $\{l(g):g\in G\}$ is exactly
$A_{0}=\operatorname{span}\{l(g):g\in G\}$.

Suppose that for any $g,h\in G$ with $g\notin K$, it is true that%
\begin{equation}
D(l(hT^{n}(g)))=0 \label{D=0}%
\end{equation}
for $n$ large enough, i.e. for $n>n_{0}$ for some $n_{0}$. Then, for any
$c_{0},b_{0}\in A_{0}$, and $a_{0}:=c_{0}-D(c_{0})$, we have
\[
Pb_{0}U^{n}a_{0}\Omega=0
\]
for $n$ large enough. Since $A_{0}$ is strongly dense in $A$, it follows that%
\[
\lim_{n\rightarrow\infty}Pb_{0}U^{n}a\Omega=0
\]
for all $a\in A$ such that $D(a)=0$, by simply considering any $c\in A$ and
some $c_{0}\in A_{0}$ such that $\left\|  c_{0}\Omega-c\Omega\right\|
<\varepsilon$ for an $\varepsilon>0$ of our choosing, and setting $a:=c-D(c)$.

Since $\lambda$ is a trace, we can apply a similar argument to $\left\|
Pb_{0}U^{n}a\Omega\right\|  =\left\|  PU^{n}a^{\ast}U^{-n}b_{0}^{\ast}%
\Omega\right\|  $ (see Eq. (\ref{normVirSp})) to show that
\[
\lim_{n\rightarrow\infty}PbU^{n}a\Omega=0
\]
and therefore
\[
\lim_{n\rightarrow\infty}\lambda(D(\alpha^{n}(a^{\ast})b^{\ast})D\left(
b\alpha^{n}(a))\right)  =0
\]
for all $a,b\in A$ such that $D(a)=0$. It follows easily from this that
$\mathbf{A}$ is weakly mixing relative to $\mathbf{F}$. The limit above could
be interpreted as $\mathbf{A}$ having a stronger property, namely that
$\mathbf{A}$ is ``strongly mixing relative to $\mathbf{F}$''.

What remains is to show specific cases for which Eq. (\ref{D=0}) holds and
which illustrate the points made above about noncommutative systems.

A simple case is when $G$ is the free group on a countably infinite set of
symbols $S$. We then consider any bijection $T:S\rightarrow S$ which has both
finite and infinite orbits in $S$, say $T$ is a permutation when restricted to
some finite non-empty subset, or to each of infinitely many finite non-empty 
subsets, while it shifts the remaining infinite subset of $S$. We obtain a 
automorphism $T$ of $G$ from this bijection. Then Eq. (\ref{D=0}) follows from 
Eq. (\ref{Dformule}).

But at the same time, $F$ is then not trivial, i.e. $F$ strictly contains the
subalgebra $\mathbb{C}1$, and is in general not abelian. In fact, $F$ is
$\ast$-isomorphic to the group von Neumann algebra of the free group $K$ on the 
symbols with finite orbits. That $F\neq\mathbb{C}1$, also implies that 
$\mathbf{A}$ is not ergodic (see \cite[Theorem 3.4]{D2}). Furthermore,
\[
\left\|  \lbrack\alpha^{n}(l(g)),l(h)]\Omega\right\|  =\sqrt{2}%
\]
if $T^{n}(g)h\neq hT^{n}(g)$, which is the case if $g$ and $h$ are in two
separate orbits, or if $g=h$ has an infinite orbit. Hence $\mathbf{A}$ is not
asymptotically abelian in the sense of \cite[Definition 1.10]{AET}.
Furthermore, $A$ is a factor.
\end{example}

\section{The joining characterization\label{afdBewys}}

This section presents the main result of the paper, still using the notation
from Section \ref{afdROB}.

Let $H_{\omega}^{W}$ denote the fixed point space of $W$. The relative
independent joining (or the relative product system) will connect to relative
weak mixing via the following notion:

\begin{definition}
We say that $\mathbf{A}\odot_{\mathbf{F}}\mathbf{A}^{\prime}$ \emph{is ergodic
relative to} the modular subsystem $\mathbf{F}$ of $\mathbf{A}$, if
$H_{\omega}^{W}\subset H_{\lambda}$.
\end{definition}

Our main goal in this paper is to prove the following characterization of
relative weak mixing:

\begin{theorem}
\label{st173}Assume that $\mu$ is a trace. Then $\mathbf{A}$ is weakly mixing
relative to $\mathbf{F}$ if and only if $\mathbf{A}\odot_{\mathbf{F}%
}\mathbf{A}^{\prime}$ is ergodic relative to $\mathbf{F}$.
\end{theorem}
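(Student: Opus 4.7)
The plan is to apply von Neumann's mean ergodic theorem to the unitary $W$ on $H_\omega$. Let $Q$ denote the projection onto $H_\omega^W$, so that $\tfrac{1}{N}\sum_{n=1}^N W^n\to Q$ strongly, and let $P_\lambda$ denote the projection of $H_\omega$ onto $H_\lambda$. Since $\alpha(F)=F$ (and hence $\alpha'(\tilde F)=\tilde F$), the subspace $H_\lambda$ is $W$-invariant, so $P_\lambda$ commutes with both $W$ and $Q$. The condition $H_\omega^W\subset H_\lambda$ is therefore equivalent to the operator identity $Q=QP_\lambda$, and the entire proof is organised around relating this identity to relative weak mixing through a single inner-product computation.

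That central identity, valid for all $a,b,c,d\in A$, is
\[
\langle\gamma_\omega(a\otimes j(b)),\,W^n\gamma_\omega(c\otimes j(d))\rangle=\lambda\!\left(D(a^*\alpha^n(c))\,D(\alpha^n(d)b^*)\right).
\]
It follows by unfolding $\omega$, using $\alpha'^n\circ j=j\circ\alpha^n$, the anti-homomorphism identity $j(x)j(y)=j(yx)$, the relation $\tilde D\circ j=j\circ D$, and the tracial identification $\Delta_\lambda(f\otimes j(g))=\lambda(fg)$ for $f,g\in F$ (which is where the tracial hypothesis on $\mu$ first enters, through $Jf\Omega=f^*\Omega$). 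An almost identical computation shows that $\gamma_\omega(a\otimes j(b))-\gamma_\omega(D(a)\otimes j(D(b)))$ is orthogonal to every $\gamma_\omega(f\otimes j(g))$ with $f,g\in F$, yielding the companion formula $P_\lambda\gamma_\omega(a\otimes j(b))=\gamma_\omega(D(a)\otimes j(D(b)))$. Because $j(A)=A'$, the vectors $\gamma_\omega(a\otimes j(b))$ are total in $H_\omega$.

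For the direction \emph{ergodic relative to $\mathbf{F}$ $\Rightarrow$ weakly mixing relative to $\mathbf{F}$}, take $a\in A$ with $D(a)=0$ and set $\xi=\gamma_\omega(a\otimes j(a^*))$; the $P_\lambda$-formula puts $\xi\in H_\lambda^\perp$. Using the central identity, $D(X)^*=D(X^*)$, and the tracial identity $\lambda(|X|^2)=\lambda(|X^*|^2)$ reduces $\langle\xi,W^n\xi\rangle$ to $\lambda(|D(a^*\alpha^n(a))|^2)$, so by the mean ergodic theorem
\[
\|Q\xi\|^2=\lim_{N\to\infty}\frac{1}{N}\sum_{n=1}^N\lambda(|D(a^*\alpha^n(a))|^2).
\]
The ergodicity hypothesis forces $Q\xi\in H_\omega^W\cap H_\lambda^\perp=\{0\}$, so this limit vanishes, and Proposition \ref{prop_equiv_one_implies_two_mixing} then yields relative weak mixing. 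For the converse, assume relative weak mixing and decompose $a=D(a)+a_0$ with $D(a_0)=0$, and similarly for $b,c,d$. Expanding the two factors $D(a^*\alpha^n(c))=D(a)^*\alpha^n(D(c))+D(a_0^*\alpha^n(c_0))$ and $D(\alpha^n(d)b^*)=\alpha^n(D(d))D(b)^*+D(\alpha^n(d_0)b_0^*)$ inside the central identity yields one ``pure'' term $\lambda(D(a)^*\alpha^n(D(c)D(d))D(b)^*)$ (depending only on the $D$-projections) together with three cross-terms, each containing a factor $D(a_0^*\alpha^n(c_0))$ or $D(\alpha^n(d_0)b_0^*)$. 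Cauchy--Schwarz combined with the weak-mixing hypothesis (after rewriting $\lambda(|D(\alpha^n(d_0)b_0^*)|^2)=\lambda(|D(b_0\alpha^n(d_0^*))|^2)$ via the trace so as to match Definition \ref{RSVdef}) forces the Ces\`aro averages of the three cross-terms to zero. Together with the $P_\lambda$-formula this gives $\langle\gamma_\omega(a\otimes j(b)),Q\gamma_\omega(c\otimes j(d))\rangle=\langle\gamma_\omega(a\otimes j(b)),QP_\lambda\gamma_\omega(c\otimes j(d))\rangle$ on a total subset of $H_\omega$, hence $Q=QP_\lambda$, and so $H_\omega^W\subset H_\lambda$.

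The main technical obstacle is the central identity and the matching $P_\lambda$-formula: one must juggle the anti-homomorphism $j$, the conditional expectation $\tilde D$ on $A'$, and the tracial identification of $\Delta_\lambda$ simultaneously, and the trace property is then re-used to swap left and right multiplications inside $D$ so that Definition \ref{RSVdef} can actually be invoked on the cross-terms. Once those two identities are established, both directions of the theorem reduce to the mean ergodic theorem together with the $\ell^1$--$\ell^2$ Cauchy--Schwarz trick already used repeatedly in Section \ref{afdRSV}.
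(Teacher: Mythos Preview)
Your proof is correct, and for the direction \emph{ergodic $\Rightarrow$ weakly mixing} it is essentially the paper's Proposition~\ref{prop151} (you phrase Lemma~\ref{lem152} as the $P_\lambda$-formula and finish via Proposition~\ref{prop_equiv_one_implies_two_mixing} rather than the raw definition, but the substance is the same).

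For the converse direction, however, your route genuinely differs from the paper's. The paper proceeds indirectly: it first shows that relative weak mixing forces $\mathbf{A}$ itself to be ergodic relative to $\mathbf{F}$ (Proposition~\ref{prop172}), characterises that relative ergodicity via Ces\`aro limits (Proposition~\ref{prop161}), and then uses this inside the expansion of $\lambda(|D(b\alpha^n(a))-D(b)D(\alpha^n(a))|^2)$ to verify the criterion of Lemma~\ref{prop160} (itself deduced from Proposition~\ref{prop157} by a polarisation argument). You instead work directly on the four-variable inner product $\lambda(D(a^*\alpha^n(c))D(\alpha^n(d)b^*))$, split each argument as $D(\cdot)+(\cdot)_0$, and kill the three cross-terms by Cauchy--Schwarz against $\lambda(|D(\cdot_0\,\alpha^n(\cdot_0))|^2)$; this bypasses the detour through relative ergodicity of $\mathbf{A}$ altogether. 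Your argument is shorter and more self-contained, but the paper's route has the advantage of isolating Propositions~\ref{prop157}, \ref{prop172} and \ref{prop161} as results of independent interest that do not require $\mu$ to be tracial.
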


The rest of this section is devoted to the proof of this theorem. We break the
proof into a sequence of smaller results. Some of these are of independent
interest (in particular Propositions \ref{prop157}, \ref{prop172} and
\ref{prop161}, and Remark \ref{relErgKar}), and do not require $\mu$ to
be tracial.

The following lemma and proposition proves one direction of Theorem
\ref{st173}. In the classical case, this direction is also proven in
\cite[Proposition 6.2]{F81} and \cite[Lemma 1.3]{FK}, but using different 
arguments.

\begin{lemma}
\label{lem152}Consider a modular subsystem $\mathbf{F}$ of the system
$\mathbf{A}$. For any $a\in A$ with $D(a)=0$ and any $b\in A^{\prime}$, we
have%
\[
\pi_{\omega}(a\otimes b)\Omega_{\omega}\perp H_{\lambda}.
\]
\end{lemma}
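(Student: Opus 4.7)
The plan is to use the second description of $H_\lambda$ established in Section \ref{afdROB}, namely $H_\lambda = \overline{\gamma_{\omega}(1 \otimes \tilde{F})}$. With this in hand, it is enough to show that the inner product $\langle \pi_\omega(a\otimes b)\Omega_\omega, \gamma_\omega(1\otimes \tilde f)\rangle$ vanishes for every $\tilde f \in \tilde F$, since $\gamma_\omega(1\otimes \tilde F)$ is dense in $H_\lambda$ and orthogonality then extends to the closure.

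The next step is a direct unpacking using the GNS relation $\gamma_\omega(t) = \pi_\omega(t)\Omega_\omega$. The inner product becomes
\[
\langle \pi_\omega(a\otimes b)\Omega_\omega, \pi_\omega(1\otimes \tilde f)\Omega_\omega\rangle
= \omega\bigl((a\otimes b)^{\ast}(1\otimes \tilde f)\bigr)
= \omega\bigl(a^{\ast}\otimes b^{\ast}\tilde f\bigr).
\]
Now invoke the defining formula $\omega = \Delta_\lambda \circ E$ with $E = D\odot \tilde D$ to rewrite this as $\Delta_\lambda\bigl(D(a^{\ast}) \otimes \tilde D(b^{\ast}\tilde f)\bigr)$, which by the definition of $\Delta_\lambda$ and $\delta$ equals $\langle \Omega, D(a^{\ast})\tilde D(b^{\ast}\tilde f)\Omega\rangle$.

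The conclusion is then immediate: since $D$ is a conditional expectation it is $\ast$-preserving, so the hypothesis $D(a) = 0$ forces $D(a^{\ast}) = D(a)^{\ast} = 0$, and the whole expression vanishes. There is essentially no obstacle here; the only observation required is to pair $a\otimes b$ against vectors of the form $1\otimes \tilde f$ (rather than $f\otimes 1$), so that the $D$ is applied directly to $a^{\ast}$ and no conditional-expectation module identity intervenes. The fact that $\omega$ is defined on the \emph{algebraic} tensor product $A\odot A'$ is what makes this slot-by-slot computation legitimate without any extra approximation argument.
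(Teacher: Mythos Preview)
Your proof is correct and in fact slightly more direct than the paper's. Both arguments test orthogonality against the dense set $\pi_{\tilde\lambda}(\tilde F)\Omega_\omega = \gamma_\omega(1\otimes\tilde F)$ in $H_\lambda$, but they compute the resulting inner product differently. The paper first establishes the intermediate structural fact that $\pi_\mu(a)\Omega_\omega \in H_\mu \ominus H_\lambda$ and then invokes an external result, \cite[Proposition~3.6]{D3}, to conclude $\pi_\mu(a)\Omega_\omega \perp H_{\mu'}$; the desired vanishing then follows since $\pi_{\mu'}(b^\ast\tilde f)\Omega_\omega \in H_{\mu'}$. You bypass this detour by directly expanding $\omega(a^\ast \otimes b^\ast\tilde f)$ via the defining formula $\omega = \Delta_\lambda \circ (D\odot\tilde D)$, so that $D(a^\ast)=D(a)^\ast=0$ kills the expression immediately. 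Your route is fully self-contained and avoids the external reference; the paper's route, on the other hand, isolates the geometric fact $(H_\mu\ominus H_\lambda)\perp H_{\mu'}$, which is of independent interest in the theory of relatively independent joinings.
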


\begin{proof}
For any $c\in F$,%
\begin{align*}
\left\langle \pi_{\lambda}(c)\Omega_{\omega},\pi_{\mu}(a)\Omega_{\omega
}\right\rangle  &  =\left\langle \Omega_{\omega},\pi_{\mu}(c^{\ast}%
a)\Omega_{\omega}\right\rangle =\mu(c^{\ast}a)\\
&  =\lambda(D(c^{\ast}a))=\lambda(c^{\ast}D(a))\\
&  =0.
\end{align*}
Hence, $\pi_{\mu}(a)\Omega_{\omega}\in H_{\mu}\ominus H_{\lambda}$. So
$\pi_{\mu}(a)\Omega_{\omega}\perp H_{\mu^{\prime}}$ by \cite[Proposition
3.6]{D3}. On the other hand, $\pi_{\mu^{\prime}}(b^{\ast}f)\Omega_{\omega}\in
H_{\mu^{\prime}}$ for any $f\in\tilde{F}$, so $\left\langle \pi_{\mu^{\prime}%
}(b^{\ast}f)\Omega_{\omega},\pi_{\mu}(a)\Omega_{\omega}\right\rangle =0$.
Therefore,
\begin{align*}
\left\langle \pi_{\tilde{\lambda}}(f)\Omega_{\omega},\pi_{\omega}(a\otimes
b)\Omega_{\omega}\right\rangle  &  =\left\langle \pi_{\omega}(1\otimes
b^{\ast})\pi_{\mu^{\prime}}(f)\Omega_{\omega},\pi_{\omega}(a\otimes
1)\Omega_{\omega}\right\rangle \\
&  =\left\langle \pi_{\mu^{\prime}}(b^{\ast}f)\Omega_{\omega},\pi_{\mu
}(a)\Omega_{\omega}\right\rangle \\
&  =0,
\end{align*}
proving the lemma, since $\pi_{\tilde{\lambda}}(\tilde{F})\Omega_{\omega}$ is 
dense in $H_\lambda$.
\end{proof}

Using this lemma we can show one direction of Theorem \ref{st173}:

\begin{proposition}
\label{prop151}Assume that $\mu$ is a trace and that $\mathbf{A}%
\odot_{\mathbf{F}}\mathbf{A}^{\prime}$ is ergodic relative to $\mathbf{F}$.
Then
\[
\lim_{N\rightarrow\infty}\frac{1}{N}\sum_{n=1}^{N}\lambda\left(  |D\left(
b\alpha^{n}(a)\right)  |^{2}\right)  =0
\]
for all $a,b\in A$ such that $D(a)=0$ or $D(b)=0$.
\end{proposition}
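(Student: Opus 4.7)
The plan is to rewrite $\lambda(|D(b\alpha^{n}(a))|^{2})$ as a single inner product of the form $\langle \eta, W^{n}\xi\rangle$ in $H_{\omega}$, and then exploit the von Neumann mean ergodic theorem in conjunction with Lemma \ref{lem152} and the hypothesis $H_{\omega}^{W}\subset H_{\lambda}$.

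The first key step is the identity
\[
\omega(c\otimes j(c^{\ast}))=\lambda(|D(c)|^{2})\qquad(c\in A).
\]
Unfolding $\omega=\Delta_{\lambda}\circ E$ and using $\tilde{D}\circ j=j\circ D$ together with $j^{2}=\mathrm{id}$, the left-hand side reduces to $\langle\Omega,D(c)\cdot JD(c)J\cdot\Omega\rangle$. The tracial hypothesis on $\mu$ enters crucially here, identifying $JD(c)\Omega$ with $D(c)^{\ast}\Omega$, so the expression becomes $\mu(D(c)D(c)^{\ast})$; a second application of the trace rewrites this as $\lambda(|D(c)|^{2})$. Setting $c=b\alpha^{n}(a)$ and using that $j$ is an anti-$\ast$-homomorphism together with $\alpha^{\prime}\circ j=j\circ\alpha$, I would then factor
\[
c\otimes j(c^{\ast})=(b\otimes j(b^{\ast}))\cdot\tau^{n}(a\otimes j(a^{\ast})).
\]
Passing to $\pi_{\omega}$ and recalling that $\pi_{\omega}\circ\tau=W\pi_{\omega}W^{-1}$ and $W\Omega_{\omega}=\Omega_{\omega}$, this yields
\[
\lambda(|D(b\alpha^{n}(a))|^{2})=\langle\eta,W^{n}\xi\rangle,
\]
with $\xi:=\pi_{\omega}(a\otimes j(a^{\ast}))\Omega_{\omega}$ and $\eta:=\pi_{\omega}(b^{\ast}\otimes j(b))\Omega_{\omega}$.

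By the mean ergodic theorem, $\frac{1}{N}\sum_{n=1}^{N}W^{n}\xi\to Q\xi$, where $Q$ is the orthogonal projection of $H_{\omega}$ onto $H_{\omega}^{W}$; hence
\[
\frac{1}{N}\sum_{n=1}^{N}\lambda(|D(b\alpha^{n}(a))|^{2})\longrightarrow\langle\eta,Q\xi\rangle.
\]
Since $H_{\omega}^{W}\subset H_{\lambda}$ by assumption, $Q\xi$ and $Q\eta$ lie in $H_{\lambda}$. If $D(a)=0$, Lemma \ref{lem152} applied with the lemma's ``$a$'' equal to $a$ and ``$b$'' equal to $j(a^{\ast})\in A^{\prime}$ gives $\xi\perp H_{\lambda}$; therefore $\|Q\xi\|^{2}=\langle\xi,Q\xi\rangle=0$, so $Q\xi=0$ and the limit vanishes. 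If instead $D(b)=0$, then $D(b^{\ast})=0$, and applying the same lemma to $b^{\ast}$ and $j(b)$ forces $\eta\perp H_{\lambda}$; then $Q\eta=0$ and self-adjointness of $Q$ gives $\langle\eta,Q\xi\rangle=\langle Q\eta,\xi\rangle=0$.

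The main obstacle is the first step: spotting the precise algebraic pairing $a\leftrightarrow j(a^{\ast})$ and $b\leftrightarrow j(b^{\ast})$ which makes $c\otimes j(c^{\ast})$ factor cleanly through $\tau^{n}$ and realises the \emph{squared} quantity $\lambda(|D(c)|^{2})$ as a single (rather than bilinear) inner product of joining vectors. Without the tracial hypothesis the identity $JD(c)\Omega=D(c)^{\ast}\Omega$ fails, and this realisation breaks down; that is exactly why the statement restricts to the tracial case, even though the orthogonality input from Lemma \ref{lem152} does not.
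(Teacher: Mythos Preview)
Your proof is correct and follows essentially the same approach as the paper: both express $\lambda(|D(b\alpha^{n}(a))|^{2})$ as $\omega$ evaluated on a tensor of the form $(b\otimes j(b^{\ast}))\tau^{n}(a\otimes j(a^{\ast}))$ (the paper writes this as $\omega(\tau^{n}(s)t)$ with $s=a^{\ast}\otimes j(a)$, $t=b^{\ast}\otimes j(b)$, which is the same up to complex conjugation), then apply the mean ergodic theorem together with Lemma~\ref{lem152} and the inclusion $H_{\omega}^{W}\subset H_{\lambda}$. Your write-up is slightly more explicit about how the tracial hypothesis and the relative ergodicity assumption enter, but the argument is the same.
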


\begin{proof}
Let $Q$ be the projection of $H_{\omega}$ onto the fixed point space
$H_{\omega}^{W}$ of $W$. By the mean ergodic theorem we then have
\begin{align*}
\lim_{N\rightarrow\infty}\frac{1}{N}\sum_{n=1}^{N}\omega(\tau^{n}(s)t)  &
=\lim_{N\rightarrow\infty}\frac{1}{N}\sum_{n=1}^{N}\left\langle W^{n}%
\pi_{\omega}(s^{\ast})\Omega_{\omega},\pi_{\omega}(t)\Omega_{\omega
}\right\rangle \\
&  =\left\langle Q\pi_{\omega}(s^{\ast})\Omega_{\omega},\pi_{\omega}%
(t)\Omega_{\omega}\right\rangle
\end{align*}
for all $s,t\in A\odot A^{\prime}$. This holds in particular for $s=a^{\ast
}\otimes j(a)$ and $t=b^{\ast}\otimes j(b)$, where $a,b\in A$, and $D(a)=0$ or
$D(b)=0$.

Suppose $D(a)=0$ (the case $D(b)=0$ is similar, by taking $Q$ to the other
side in the inner product above). Then $\pi_{\omega}(s^{\ast})\Omega_{\omega
}\perp H_{\omega}^{W}$ by Lemma \ref{lem152}, so $Q\pi_{\omega}(s^{\ast
})\Omega_{\omega}=0$. This means, by the definition of $\omega=\mu
\odot_{\lambda}\mu^{\prime}$ in Eq. (\ref{mumu}), that%
\begin{align*}
0  &  =\lim_{N\rightarrow\infty}\frac{1}{N}\sum_{n=1}^{N}\left\langle
\Omega,D(\alpha^{n}(a^{\ast})b^{\ast})\tilde{D}(\alpha^{\prime n}%
(j(a))j(b))\Omega\right\rangle \\
&  =\lim_{N\rightarrow\infty}\frac{1}{N}\sum_{n=1}^{N}\lambda(D(\alpha
^{n}(a^{\ast})b^{\ast})D(b\alpha^{n}(a))),
\end{align*}
as required, since%
\[
\tilde{D}(\alpha^{\prime n}(j(a))j(b)\Omega=PJ\alpha^{n}(a^{\ast})b^{\ast
}\Omega=D(b\alpha^{n}(a))\Omega,
\]
where we have used the fact that $\mu$ is a trace (so $Jc\Omega=c^{\ast}%
\Omega$ for all $c\in A$).
\end{proof}

Next we consider the other direction of Theorem \ref{st173}. We don't have a
reference to a proof of the classical case of this direction. Our first step
is the following:

\begin{proposition}
\label{prop157}$\mathbf{A}\odot_{\mathbf{F}}\mathbf{A}^{\prime}$ is ergodic
relative to $\mathbf{F}$ if and only if
\begin{equation}
\lim_{N\rightarrow\infty}\frac{1}{N}\sum_{n=1}^{N}\omega(t\tau^{n}%
(s))=\lim_{N\rightarrow\infty}\frac{1}{N}\sum_{n=1}^{N}\omega(E(t)\tau
^{n}(E(s))) \label{tensRSV}%
\end{equation}
for all $s,t\in A\odot A^{\prime}$. Both limits exist, whether $\mathbf{A}%
\odot_{\mathbf{F}}\mathbf{A}^{\prime}$ is ergodic relative to $\mathbf{F}$ or not.
\end{proposition}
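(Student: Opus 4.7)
The plan is to express both Cesàro averages in (\ref{tensRSV}) through the mean ergodic theorem and then reduce the equivalence to an identity between projections. Let $Q$ denote the orthogonal projection of $H_\omega$ onto the fixed-point space $H_\omega^W$, and let $P_\lambda$ denote the projection of $H_\omega$ onto $H_\lambda$. Writing $\omega(t\tau^n(s)) = \langle\pi_\omega(t^*)\Omega_\omega, W^n\pi_\omega(s)\Omega_\omega\rangle$ and applying the mean ergodic theorem for $W$, both limits in (\ref{tensRSV}) exist unconditionally and equal
\[
\langle\pi_\omega(t^*)\Omega_\omega,\, Q\pi_\omega(s)\Omega_\omega\rangle \quad\text{and}\quad \langle\pi_\omega(E(t^*))\Omega_\omega,\, Q\pi_\omega(E(s))\Omega_\omega\rangle,
\]
respectively; here I use that $E = D \odot \tilde D$ is $\ast$-preserving, so $E(t)^* = E(t^*)$.

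The central ingredient is the bridge identity
\[
\pi_\omega(E(s))\Omega_\omega = P_\lambda \pi_\omega(s)\Omega_\omega \qquad (s \in A \odot A').
\]
That the left side lies in $H_\lambda$ follows because $E(s) \in F \odot \tilde F$ and $\gamma_\omega(F \odot \tilde F) \subset H_\lambda$: on a generator $f \otimes \tilde f$, one writes $\pi_\omega(f \otimes \tilde f)\Omega_\omega = \pi_\omega(f \otimes 1)\,\pi_\omega(1 \otimes \tilde f)\Omega_\omega$, notes that the rightmost vector lies in $H_\lambda = \overline{\gamma_\omega(1 \otimes \tilde F)}$, and observes that $\pi_\omega(f \otimes 1)$ acts on $H_\lambda$ as $\pi_\lambda(f)$ and so preserves it. For the orthogonality of the residual, the bimodule property $D(f^* a) = f^* D(a)$ and its analogue for $\tilde D$ give $E(c^* s) = c^* E(s)$ for every $c \in F \odot \tilde F$, so combined with $\omega|_{F \odot \tilde F} = \Delta_\lambda$ (where $E$ acts as the identity) one computes
\[
\langle\pi_\omega(c)\Omega_\omega, \pi_\omega(s)\Omega_\omega\rangle = \omega(c^* s) = \Delta_\lambda(c^* E(s)) = \langle\pi_\omega(c)\Omega_\omega, \pi_\omega(E(s))\Omega_\omega\rangle.
\]
Since $\pi_\omega(F \odot \tilde F)\Omega_\omega$ is dense in $H_\lambda$, the vector $\pi_\omega(s)\Omega_\omega - \pi_\omega(E(s))\Omega_\omega$ is orthogonal to $H_\lambda$, completing the identity.

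To conclude, observe that $W$ leaves $H_\lambda$ invariant: on a generator, $W \pi_\omega(1 \otimes \tilde f)\Omega_\omega = \pi_\omega(1 \otimes \tilde\varphi(\tilde f))\Omega_\omega \in H_\lambda$ since $\tilde\varphi(\tilde F) = \tilde F$. Hence $W$ commutes with $P_\lambda$, and so does the Cesàro-limit $Q$. Substituting the bridge identity, the second limit in (\ref{tensRSV}) becomes
\[
\langle P_\lambda \pi_\omega(t^*)\Omega_\omega, Q P_\lambda \pi_\omega(s)\Omega_\omega\rangle = \langle\pi_\omega(t^*)\Omega_\omega, Q P_\lambda \pi_\omega(s)\Omega_\omega\rangle,
\]
using $[Q, P_\lambda] = 0$ and $P_\lambda^2 = P_\lambda$. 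Thus (\ref{tensRSV}) holds for all $s, t \in A \odot A'$ iff $Q = Q P_\lambda$ on the dense subspace $\gamma_\omega(A \odot A') \subset H_\omega$, iff $Q(I - P_\lambda) = 0$, iff $\mathrm{range}(Q) = H_\omega^W \subset H_\lambda$, which is precisely relative ergodicity of $\mathbf{A} \odot_{\mathbf{F}} \mathbf{A}'$. The main obstacle is the bridge identity; once it is secured, the rest is a routine manipulation of commuting projections together with the mean ergodic theorem.
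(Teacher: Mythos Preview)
Your argument is correct and follows the same skeleton as the paper's proof: apply the mean ergodic theorem to reduce both sides to inner products involving the projection $Q$ onto $H_\omega^W$, establish the key identity $\gamma_\omega(E(s))=P_\lambda\gamma_\omega(s)$ (what you call the bridge identity, and what the paper writes as $\gamma_\omega(E(s))=R\gamma_\omega(s)$), and then conclude by a projection argument.

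The one genuine difference is in how the bridge identity is obtained. The paper proves it by working inside the representations $\pi_\mu$ and $\pi_{\mu'}$, invoking the concrete description of $D$ via $P_\mu$ and $P_{\mu'}$, and appealing to the orthogonality result \cite[Proposition~3.6]{D3} for the relative independent joining. You instead give a short, self-contained algebraic argument: the bimodule property of $E$ together with $\omega=\Delta_\lambda\circ E$ yields $\omega(c^{*}s)=\omega(c^{*}E(s))$ for $c\in F\odot\tilde F$, whence $\gamma_\omega(s)-\gamma_\omega(E(s))\perp H_\lambda$. This avoids the external citation and is arguably cleaner. A second minor difference is that you explicitly use $[W,P_\lambda]=0$ to simplify $P_\lambda Q P_\lambda$ to $QP_\lambda$, whereas the paper leaves the expression as $RQR$ and uses directly that $RQR=Q$ is equivalent to $Q\leq R$; both routes are valid. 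One small caution: your justification that $\gamma_\omega(f\otimes\tilde f)\in H_\lambda$ via ``$\pi_\omega(f\otimes 1)$ acts on $H_\lambda$ as $\pi_\lambda(f)$'' is really an appeal to the identification $H_\lambda=\overline{\gamma_\omega(F\odot\tilde F)}$ established in Section~\ref{afdROB} through the unitary $V$; it would read more transparently if you cited that directly.
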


\begin{proof}
Let $Q$ be the projection of $H_{\omega}$ onto the fixed point space
$H_{\omega}^{W}$ of $W$. Let $R$ be the projection of $H_{\omega}$ onto
$H_{\lambda}$.

By the mean ergodic theorem, for all $s,t\in A\odot A^{\prime}$,%
\[
\lim_{N\rightarrow\infty}\frac{1}{N}\sum_{n=1}^{N}\omega(t\tau^{n}%
(s))=\left\langle \gamma_{\omega}(t^{\ast}),Q\gamma_{\omega}(s)\right\rangle
\]

and%
\[
\lim_{N\rightarrow\infty}\frac{1}{N}\sum_{n=1}^{N}\omega(E(t)\tau
^{n}(E(s)))=\left\langle \gamma_{\omega}(E(t^{\ast})),Q\gamma_{\omega
}(E(s))\right\rangle .
\]

Let $P_{\mu}$ be the projection of $H_{\mu}\ $onto $H_{\lambda}$, and
$P_{\mu^{\prime}}$ the projection of $H_{\mu^{\prime}}$ onto $H_{\lambda}$.
Consider $s=a\otimes b$, where $a\in A$ and $b\in A^{\prime}$. Then, because
$\pi_{\mu^{\prime}}(\tilde{D}(b))\Omega_{\omega}\in H_{\lambda}$, we know by
the construction of $D$ (see for example \cite[Section 10.2]{S}) that
\begin{align*}
\gamma_{\omega}(E(s)) &  =\pi_{\mu}(D(a))\pi_{\mu^{\prime}}(\tilde
{D}(b))\Omega_{\omega}=\pi_{\lambda}(D(a))\pi_{\mu^{\prime}}(\tilde
{D}(b))\Omega_{\omega}\\
&  =P_{\mu}\pi_{\mu}(a)\pi_{\mu^{\prime}}(\tilde{D}(b))\Omega_{\omega}=P_{\mu
}\pi_{\mu}(a)P_{\mu^{\prime}}\pi_{\mu^{\prime}}(b)\Omega_{\omega}\\
&  =R\pi_{\mu}(a)R\pi_{\mu^{\prime}}(b)\Omega_{\omega}.
\end{align*}
since $R|_{H_{\mu}}=P_{\mu}$ and $R|_{H_{\mu^{\prime}}}=P_{\mu^{\prime}}$.

For $y\in H_{\mu^{\prime}}\ominus H_{\lambda}$ and $f\in F$, we have
\[
\left\langle \pi_{\lambda}(f)\Omega_{\omega},\pi_{\omega}(a\otimes
1)y\right\rangle =\left\langle \pi_{\mu}(a^{\ast}f)\Omega_{\omega
},y\right\rangle =0,
\]
since $\pi_{\mu}(a^{\ast}f)\Omega_{\omega}\in H_{\mu}\perp(H_{\mu^{\prime}%
}\ominus H_{\lambda})$ by \cite[Proposition 3.6]{D3}. So $\pi_{\omega
}(a\otimes1)y\perp H_{\lambda}$, which means that
\[
\gamma_{\omega}(E(s))=R\pi_{\omega}(a\otimes1)\pi_{\mu^{\prime}}%
(b)\Omega_{\omega}=R\pi_{\omega}(a\otimes b)\Omega_{\omega}.
\]
So
\[
\gamma_{\omega}(E(s))=R\gamma_{\omega}(s)
\]
for all $s\in A\odot A^{\prime}$. Hence,
\[
\left\langle \gamma_{\omega}(E(t^{\ast})),Q\gamma_{\omega}(E(s))\right\rangle
=\left\langle R\gamma_{\omega}(t^{\ast}),QR\gamma_{\omega}(s)\right\rangle
\]
for all $s,t\in A\odot A^{\prime}$.

Now, if $\mathbf{A}\odot_{\mathbf{F}}\mathbf{A}^{\prime}$ is ergodic relative
to $\mathbf{F}$, i.e. $Q\leq R$, it follows that%
\[
\left\langle \gamma_{\omega}(E(t^{\ast})),Q\gamma_{\omega}(E(s))\right\rangle
=\left\langle \gamma_{\omega}(t^{\ast}),Q\gamma_{\omega}(s)\right\rangle
\]
from which we see that Eq. (\ref{tensRSV}) holds for all $s,t\in A\odot
A^{\prime}$.

Conversely, if Eq. (\ref{tensRSV}) holds for all $s,t\in A\odot A^{\prime}$,
then we have%
\[
\left\langle R\gamma_{\omega}(t^{\ast}),QR\gamma_{\omega}(s)\right\rangle
=\left\langle \gamma_{\omega}(t^{\ast}),Q\gamma_{\omega}(s)\right\rangle
\]
for all $s,t\in A\odot A^{\prime}$. It follows that $RQR=Q$, so $Q\leq R$,
meaning $\mathbf{A}\odot_{\mathbf{F}}\mathbf{A}^{\prime}$ is ergodic relative
to $\mathbf{F}$.
\end{proof}

As a consequence of this proposition, we have the following lemma towards the
proof of Theorem \ref{st173}:

\begin{lemma}
\label{prop160}Assume that $\mu$ is a trace. Then $\mathbf{A}\odot
_{\mathbf{F}}\mathbf{A}^{\prime}$ is ergodic relative to $\mathbf{F}$ if and
only if
\begin{equation}
\lim_{N\rightarrow\infty}\frac{1}{N}\sum_{n=1}^{N}\lambda(|D(b\alpha
^{n}(a))|^{2})=\lim_{N\rightarrow\infty}\frac{1}{N}\sum_{n=1}^{N}%
\lambda(|D(b)D(\alpha^{n}(a))|^{2}) \label{prodRSV}%
\end{equation}
for all $a,b\in A$. Both limits exist, whether $\mathbf{A}\odot_{\mathbf{F}%
}\mathbf{A}^{\prime}$ is ergodic relative to $\mathbf{F}$ or not.
\end{lemma}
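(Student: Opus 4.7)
The plan is to reduce the lemma to Proposition~\ref{prop157} by specializing $s,t\in A\odot A'$ to tensors of the form $\Psi(c):=c\otimes j(c^{\ast})$, which turn (\ref{tensRSV}) into (\ref{prodRSV}) exactly.

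The first step is to establish a handful of identities for $\Psi$, using that $\mu$ is a trace. Traciality gives $Jx\Omega=x^{\ast}\Omega$ for $x\in A$, hence $j(y)\Omega=Jy^{\ast}\Omega=y\Omega$ for every $y\in A$. Combining this with the antihomomorphism property of $j$, the relation $\alpha'=j\circ\alpha\circ j$, the commutation $\alpha\circ D=D\circ\alpha$, and the definition $E=D\odot\tilde{D}$, one obtains
\[
\tau^{n}(\Psi(c))=\Psi(\alpha^{n}(c)),\qquad E(\Psi(c))=\Psi(D(c)),\qquad \Psi(b)\Psi(c)=\Psi(bc),
\]
and the state identity
\[
\omega(\Psi(c))=\lambda(|D(c)|^{2}),
\]
the last using $\tilde{D}(j(c^{\ast}))\Omega=j(D(c)^{\ast})\Omega=D(c)^{\ast}\Omega$ followed by the trace property. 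Together these yield, for $s=\Psi(a)$ and $t=\Psi(b)$,
\[
\omega(t\tau^{n}(s))=\omega(\Psi(b\alpha^{n}(a)))=\lambda(|D(b\alpha^{n}(a))|^{2})
\]
and
\[
\omega(E(t)\tau^{n}(E(s)))=\omega(\Psi(D(b)D(\alpha^{n}(a))))=\lambda(|D(b)D(\alpha^{n}(a))|^{2}),
\]
the last step using $D(b)D(\alpha^{n}(a))\in F$ so that $D$ leaves it fixed.

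The forward direction is now immediate: if $\mathbf{A}\odot_{\mathbf{F}}\mathbf{A}^{\prime}$ is ergodic relative to $\mathbf{F}$, Proposition~\ref{prop157} applied with $s=\Psi(a)$ and $t=\Psi(b)$ yields (\ref{prodRSV}). For the converse I would polarize:
\begin{align*}
\Psi(a+b)-\Psi(a)-\Psi(b)&=a\otimes j(b^{\ast})+b\otimes j(a^{\ast}),\\
\Psi(a+ib)-\Psi(a)-\Psi(b)&=i\bigl(b\otimes j(a^{\ast})-a\otimes j(b^{\ast})\bigr).
\end{align*}
Solving these identities exhibits every simple tensor $a\otimes j(b^{\ast})$ as a linear combination of $\Psi$-values, and since $j(A)=A'$ this gives $\operatorname{span}\{\Psi(c):c\in A\}=A\odot A'$. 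Both sides of (\ref{tensRSV}) are bilinear in $(s,t)$, so (\ref{prodRSV}) for all $a,b\in A$ propagates, first in $s$ and then in $t$, to (\ref{tensRSV}) for all $s,t\in A\odot A'$; Proposition~\ref{prop157} then delivers the ergodicity. Existence of the limits in (\ref{prodRSV}), with or without ergodicity, is inherited from their counterparts in (\ref{tensRSV}) via the same correspondence.

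The main obstacle is the state identity $\omega(\Psi(c))=\lambda(|D(c)|^{2})$: this is exactly where the tracial hypothesis enters essentially, via the collapse $j(D(c)^{\ast})\Omega=D(c)^{\ast}\Omega$. Without traciality, pairing $\omega$ with $c\otimes j(c^{\ast})$ would not recover a quadratic form in $D(c)$, and the reduction to Proposition~\ref{prop157} would break down; once these identities are in hand, the bilinearity and polarization step is routine.
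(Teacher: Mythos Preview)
Your proof is correct and follows essentially the same route as the paper: both specialize Proposition~\ref{prop157} to elements of the form $a\otimes j(a^{\ast})=a\otimes JaJ$, use the trace to rewrite $\omega$ on such tensors as $\lambda(|D(\cdot)|^{2})$, and then polarize to recover (\ref{tensRSV}) for arbitrary $s,t$. Your packaging via the multiplicative map $\Psi$ and its compatibility with $\tau$, $E$, and $\omega$ is a bit more streamlined than the paper's step-by-step unwinding, but the underlying argument is the same.
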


\begin{proof}
Suppose $\mathbf{A}\odot_{\mathbf{F}}\mathbf{A}^{\prime}$ is ergodic relative
to $\mathbf{F}$, then Eq. (\ref{tensRSV}) holds. Applying it to $s=a\otimes c$
and $t=b\otimes d$, for $a,b\in A$ and $c,d\in A^{\prime}$, we obtain%
\begin{align*}
&  \lim_{N\rightarrow\infty}\frac{1}{N}\sum_{n=1}^{N}\omega([b\alpha
^{n}(a)]\otimes\lbrack d\alpha^{\prime n}(c)])\\
&  =\lim_{N\rightarrow\infty}\frac{1}{N}\sum_{n=1}^{N}\omega([D(b)D(\alpha
^{n}(a))]\otimes\lbrack\tilde{D}(d)\tilde{D}(\alpha^{\prime n}(c))]).
\end{align*}
Using the definition of $\omega$, this is equivalent to%
\begin{align*}
&  \lim_{N\rightarrow\infty}\frac{1}{N}\sum_{n=1}^{N}\left\langle
\Omega,D(b\alpha^{n}(a))\tilde{D}(d\alpha^{\prime n}(c))\Omega\right\rangle \\
&  =\lim_{N\rightarrow\infty}\frac{1}{N}\sum_{n=1}^{N}\left\langle
\Omega,D(b)D(\alpha^{n}(a))\tilde{D}(d)\tilde{D}(\alpha^{\prime n}%
(c))\Omega\right\rangle .
\end{align*}
Setting $c=j(a^{\ast})=JaJ$ and $d=j(b^{\ast})=JbJ$, we have in particular%
\begin{align*}
&  \lim_{N\rightarrow\infty}\frac{1}{N}\sum_{n=1}^{N}\left\langle
\Omega,D(b\alpha^{n}(a))JD(b\alpha^{n}(a))\Omega\right\rangle \\
&  =\lim_{N\rightarrow\infty}\frac{1}{N}\sum_{n=1}^{N}\left\langle
\Omega,D(b)D(\alpha^{n}(a))JD(b)D(\alpha^{n}(a))\Omega\right\rangle .
\end{align*}
Since $\mu$ is a trace, this is equivalent to
\begin{align*}
&  \lim_{N\rightarrow\infty}\frac{1}{N}\sum_{n=1}^{N}\left\langle
\Omega,D(b\alpha^{n}(a))D(\alpha^{n}(a^{\ast})b^{\ast})\Omega\right\rangle \\
&  =\lim_{N\rightarrow\infty}\frac{1}{N}\sum_{n=1}^{N}\left\langle
\Omega,D(b)D(\alpha^{n}(a))D(\alpha^{n}(a^{\ast}))D(b^{\ast})\Omega
\right\rangle .
\end{align*}
Since $\lambda$ is a trace, this is equivalent to Eq. (\ref{prodRSV}).

Note that from the manipulations above we also see that
\[
\lim_{N\rightarrow\infty}\frac{1}{N}\sum_{n=1}^{N}\lambda(|D(b\alpha
^{n}(a))|^{2})=\lim_{N\rightarrow\infty}\frac{1}{N}\sum_{n=1}^{N}\omega
(t\tau^{n}(s))
\]
and%
\[
\lim_{N\rightarrow\infty}\frac{1}{N}\sum_{n=1}^{N}\lambda(|D(b)D(\alpha
^{n}(a))|^{2})=\lim_{N\rightarrow\infty}\frac{1}{N}\sum_{n=1}^{N}%
\omega(E(t)\tau^{n}(E(s))
\]
exist by Proposition \ref{prop157}, whether $\mathbf{A}\odot_{\mathbf{F}%
}\mathbf{A}^{\prime}$ is ergodic relative to $\mathbf{F}$ or not, where
$s=a\otimes(JaJ)$ and $t=b\otimes(JbJ)$.

Now, suppose Eq. (\ref{prodRSV}) holds, then we have by the equivalences
above, that%
\begin{align*}
&  \lim_{N\rightarrow\infty}\frac{1}{N}\sum_{n=1}^{N}\omega([b\alpha
^{n}(a)]\otimes\lbrack JbJ\alpha^{\prime n}(JaJ)])\\
&  =\lim_{N\rightarrow\infty}\frac{1}{N}\sum_{n=1}^{N}\omega([D(b)D(\alpha
^{n}(a))]\otimes\lbrack\tilde{D}(JbJ)\tilde{D}(\alpha^{\prime n}(JaJ))]),
\end{align*}
i.e.
\begin{align*}
&  \lim_{N\rightarrow\infty}\frac{1}{N}\sum_{n=1}^{N}\omega((b\otimes
(JbJ))\tau^{n}(a\otimes(JaJ)))\\
&  =\lim_{N\rightarrow\infty}\frac{1}{N}\sum_{n=1}^{N}\omega(E(b\otimes
(JbJ))\tau^{n}(E(a\otimes(JaJ)))).
\end{align*}
Because of the polarization identity, applied in turn to the two appearances
of the sesquilinear form $A\times A\ni(a,c)\mapsto a\otimes(JcJ)$ above (once
inside $\tau^{n}$ and once outside), Eq. (\ref{tensRSV}) then follows, so
$\mathbf{A}\odot_{\mathbf{F}}\mathbf{A}^{\prime}$ is ergodic relative to
$\mathbf{F}$ by Proposition \ref{prop157}.
\end{proof}

In order to proceed, we need the notion of relative ergodicity for a system itself:

\begin{definition}
\label{relErg}We say that $\mathbf{A}$ \emph{is ergodic relative to
}$\mathbf{F}$ if $H^{U}\subset H_{F}$, where $H^{U}$ is the fixed point space
of $U:H\rightarrow H$, and $H_{F}=\overline{F\Omega}$.
\end{definition}

This generalizes ergodicity of $\mathbf{A}$, which is the special case
$H^{U}=\mathbb{C}\Omega$.

\begin{remark}
\label{relErgKar}In \cite[Definition 4.1]{D3} an alternative condition was
used instead of $H^{U}\subset H_{F}$ to define relative ergodicity, namely
\[
A^{\alpha}\subset F,
\]
where $A^{\alpha}:=\{a\in A:\alpha(a)=a\}$. For our purposes here, Definition
\ref{relErg} is the more convenient definition, but the question nevertheless
arises whether the two conditions are equivalent. From \cite[Proposition
4.2]{D3} we know that $H^{U}=\overline{A^{\alpha}\Omega}$, so if $A^{\alpha
}\subset F$, then $H^{U}\subset H_{F}$. This fact is used in Proposition
\ref{prop172}.

We do not need the converse. However, it does hold, since $\mathbf{F}$ is a
modular subsystem, as we now explain. The conditional expectation $D$ is
determined by%
\[
D(a)|_{H_{F}}=Pa|_{H_{F}}%
\]
for all $a\in A$; see for example \cite[Section 10.2]{S}. The subalgebra
$A^{\alpha}$ is easily seen to be globally invariant under the modular group
as well (see \cite[Proposition 4.2]{D3}), hence we also have a unique
conditional expectation $D_{A^{\alpha}}:A\rightarrow A^{\alpha}$ such that
$\mu\circ D_{A^{\alpha}}=\mu$, which is similarly determined by%
\[
D_{A^{\alpha}}(a)|_{H^{U}}=Qa|_{H^{U}}%
\]
where $Q$ is the projection of $H$ onto $H^{U}$. Assuming $H^{U}\subset H_{F}%
$, it follows that
\[
D(D_{A^{\alpha}}(a))|_{H^{U}}=PD_{A^{\alpha}}(a)|_{H^{U}}=Qa|_{H^{U}%
}=D_{A^{\alpha}}(a)|_{H^{U}}%
\]
and therefore $D(D_{A^{\alpha}}(a))=D_{A^{\alpha}}(a)$, since $\Omega\in
H^{U}$ is separating for $A$. So, for $a\in A^{\alpha}$, we have
\[
a=D_{A^{\alpha}}(a)=D(D_{A^{\alpha}}(a))\in F
\]
which means that $A^{\alpha}\subset F$.

To summarize: $\mathbf{A}$ is ergodic relative to $\mathbf{F}$, if and only if
$A^{\alpha}\subset F$.
\end{remark}

The following generalizes the standard fact that weak mixing implies ergodicity:

\begin{proposition}
\label{prop172}If $\mathbf{A}$ is weakly mixing relative to $\mathbf{F}$, then
$\mathbf{A}$ is ergodic relative to $\mathbf{F}$.
\end{proposition}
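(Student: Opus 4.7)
The plan is to use the equivalent characterization of relative ergodicity established in Remark \ref{relErgKar}, namely that $\mathbf{A}$ is ergodic relative to $\mathbf{F}$ if and only if $A^{\alpha}\subset F$. So I will assume $\mathbf{A}$ is weakly mixing relative to $\mathbf{F}$ and take an arbitrary $a\in A^{\alpha}$, aiming to conclude $a\in F$.

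The key manoeuvre is to reduce to an element lying in the kernel of $D$. Define $a_{0}:=a-D(a)$. Then $D(a_{0})=0$, and using the identity $\alpha\circ D=D\circ\alpha$ (noted just after the definition of $D$ in Section \ref{afdROB}) together with $\alpha(a)=a$, one checks immediately that $\alpha(a_{0})=a-D(\alpha(a))=a-D(a)=a_{0}$, so $a_{0}\in A^{\alpha}$ as well. Since $D$ preserves adjoints, setting $b:=a_{0}^{\ast}$ also gives $D(b)=0$, so the pair $a_{0},b$ is admissible in Definition \ref{RSVdef}.

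Now plug this pair into the relative weak mixing condition. Because $\alpha^{n}(a_{0})=a_{0}$ for every $n$, the expression $\lambda(|D(b\,\alpha^{n}(a_{0}))|^{2})$ is independent of $n$ and equals the constant $\lambda(|D(a_{0}^{\ast}a_{0})|^{2})$. The Cesàro average of a constant sequence is that constant, so relative weak mixing forces
\[
\lambda(|D(a_{0}^{\ast}a_{0})|^{2})=0.
\]
Since $\lambda$ is faithful and $|D(a_{0}^{\ast}a_{0})|^{2}\geq 0$, we get $D(a_{0}^{\ast}a_{0})=0$. Then $\mu(a_{0}^{\ast}a_{0})=\lambda(D(a_{0}^{\ast}a_{0}))=0$, and faithfulness of $\mu$ yields $a_{0}=0$, i.e.\ $a=D(a)\in F$. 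Hence $A^{\alpha}\subset F$.

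There is no serious obstacle: everything reduces to observing that fixed vectors of $\alpha$ turn the ergodic average in Definition \ref{RSVdef} into a constant, and then exploiting the faithfulness of $\mu$ (hence of $\lambda\circ D$). The proof does not require $\mu$ to be tracial, consistently with the remark preceding the proposition.
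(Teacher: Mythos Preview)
Your proof is correct and follows essentially the same approach as the paper: both arguments show $A^{\alpha}\subset F$ by observing that for a fixed point $a$ the ergodic average becomes constant, then use faithfulness of $\lambda$ and $\mu$ to force $a-D(a)=0$. The only cosmetic difference is that the paper routes the computation through Proposition~\ref{prop140} (obtaining $D(ba)-D(b)D(a)=0$ for all $b$) before specializing to $b=(a-D(a))^{\ast}$, whereas you set $a_{0}=a-D(a)$ first and apply Definition~\ref{RSVdef} directly.
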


\begin{proof}
From Proposition \ref{prop140}, we have
$\lambda(|D(ba) -D(b)D(a)|^{2})  =0$ for $a\in A^{\alpha}$ and all $b\in A$. Since
$\lambda$ is faithful, it follows that $D(b(a-D(a)))=D\left(  ba\right)
-D(b)D(a)=0$. In particular, setting $b=(a-D(a))^{\ast}$, we conclude that
$a=D(a)\in F$, since $\mu$ is faithful and $\lambda\circ D=\mu$. So
$A^{\alpha}\subset F$, hence $H^{U}\subset H_{F}$ by the first part of Remark
\ref{relErgKar}.
\end{proof}

Next we consider a version of Proposition \ref{prop157} for a system itself.

\begin{proposition}
\label{prop161}$\mathbf{A}$ is ergodic relative to $\mathbf{F}$ if and only if%
\begin{equation}
\lim_{N\rightarrow\infty}\frac{1}{N}\sum_{n=1}^{N}\mu(b\alpha^{n}%
(a))=\lim_{N\rightarrow\infty}\frac{1}{N}\sum_{n=1}^{N}\lambda(D(b)\alpha
^{n}(D(a)))\label{relErgKar2}%
\end{equation}
for all $a,b\in A$. Both limits exist, whether $\mathbf{A}$ is ergodic
relative to $\mathbf{F}$ or not.
\end{proposition}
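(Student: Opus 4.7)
My plan is to mirror the structure of the proof of Proposition \ref{prop157}, but for the single system $\mathbf{A}$ rather than for the relative product, replacing $H_\omega$ by $H$, $W$ by $U$, the projection onto $H_\omega^W$ by the projection $Q$ onto $H^U$, and the projection $R$ onto $H_\lambda$ by the projection $P$ onto $H_F$. The whole argument then reduces to a short piece of Hilbert-space geometry connecting $Q$ and $P$.

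First, by the mean ergodic theorem applied to $U$, for every $a,b\in A$ the limit
\[
\lim_{N\to\infty}\frac{1}{N}\sum_{n=1}^{N}\mu(b\alpha^{n}(a))=\lim_{N\to\infty}\frac{1}{N}\sum_{n=1}^{N}\langle b^{\ast}\Omega,U^{n}a\Omega\rangle=\langle b^{\ast}\Omega,Qa\Omega\rangle
\]
exists. For the right-hand side of (\ref{relErgKar2}), I use $\lambda\circ D=\mu$ together with $D(a)\Omega=Pa\Omega$ and $D(b^{\ast})\Omega=Pb^{\ast}\Omega$ to rewrite
\[
\lambda(D(b)\alpha^{n}(D(a)))=\langle Pb^{\ast}\Omega,U^{n}Pa\Omega\rangle,
\]
and then the mean ergodic theorem again gives existence and the value $\langle Pb^{\ast}\Omega,QPa\Omega\rangle=\langle b^{\ast}\Omega,PQP\,a\Omega\rangle$. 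This already disposes of the existence claim at the end of the statement.

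What remains is the two-sided implication. If $\mathbf{A}$ is ergodic relative to $\mathbf{F}$, i.e.\ $H^{U}\subset H_{F}$, then $Q\leq P$, so $PQ=QP=Q$ and in particular $PQP=Q$, which immediately gives equality of the two inner products for all $a,b\in A$. Conversely, if the two Cesàro limits coincide for all $a,b\in A$, then $\langle b^{\ast}\Omega,Qa\Omega\rangle=\langle b^{\ast}\Omega,PQP\,a\Omega\rangle$ for all $a,b\in A$; since $\Omega$ is cyclic for $A$, the set $A\Omega$ is dense in $H$, so $Q=PQP$ as operators on $H$. But then for any $v\in H$, $Qv=P(QPv)\in\operatorname{range}(P)=H_{F}$, so $H^{U}=\operatorname{range}(Q)\subset H_{F}$, which is relative ergodicity of $\mathbf{A}$.

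No step here is really a serious obstacle; the only thing to be careful about is not confusing $Q\leq P$ with $PQP=Q$ (the former immediately implies the latter, and the latter is what the density argument delivers, from which one reads off $\operatorname{range}(Q)\subset\operatorname{range}(P)$ directly). The cleanest write-up is to collect the two mean-ergodic computations first, and then close with a one-line projection argument in each direction.
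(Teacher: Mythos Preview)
Your proposal is correct and follows exactly the approach the paper intends: the paper's own proof simply says to rerun the argument of Proposition~\ref{prop157} with $Q$ the projection onto $H^{U}$ and $P$ in place of $R$, and that is precisely what you have written out. The only simplification in your setting compared with Proposition~\ref{prop157} is that the identity $D(a)\Omega=Pa\Omega$ is already given in Section~\ref{afdROB}, so no analogue of the $\gamma_{\omega}(E(s))=R\gamma_{\omega}(s)$ computation is needed.
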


\begin{proof}
Essentially the same argument, using the mean ergodic theorem, as in the proof
of Proposition \ref{prop157}, but with $Q$ now the projection of $H$ onto
$H^{U}$, and with $R$ replaced by $P$.
\end{proof}

Using the last three results, we can now prove the remaining direction of
Theorem \ref{st173}:

\begin{proposition}
\label{prop162}Assume that $\mu$ is tracial and that $\mathbf{A}$ is weakly
mixing relative to $\mathbf{F}$. Then $\mathbf{A}\odot_{\mathbf{F}}%
\mathbf{A}^{\prime}$ is ergodic relative to $\mathbf{F}$.
\end{proposition}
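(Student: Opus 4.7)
The plan is to verify Eq.~(\ref{prodRSV}) of Lemma~\ref{prop160}, since (with $\mu$ tracial) that lemma identifies this equality of Ces\`aro limits with $\mathbf{A}\odot_{\mathbf{F}}\mathbf{A}^{\prime}$ being ergodic relative to $\mathbf{F}$. Fix $a,b\in A$ and set $X_n := D(b\alpha^n(a))$, $Y_n := D(b)D(\alpha^n(a))$, $Z_n := X_n - Y_n$. The weak mixing hypothesis together with Proposition~\ref{prop140} yields $\frac{1}{N}\sum_{n=1}^{N}\lambda(|Z_n|^2)\to 0$ as $N\to\infty$.

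Expand $|X_n|^2 = |Y_n|^2 + Y_n^{\ast}Z_n + Z_n^{\ast}Y_n + |Z_n|^2$ and apply $\lambda$. Cauchy--Schwarz for the state $\lambda$ gives $|\lambda(Y_n^{\ast}Z_n)| \le \lambda(|Y_n|^2)^{1/2}\lambda(|Z_n|^2)^{1/2} \le \|a\|\|b\|\,\lambda(|Z_n|^2)^{1/2}$ (since $D$ is contractive, so $\|Y_n\|\le\|a\|\|b\|$), with the same bound for $\lambda(Z_n^{\ast}Y_n)$. A second Cauchy--Schwarz across the Ces\`aro sum then produces
\[
\frac{1}{N}\sum_{n=1}^{N}|\lambda(Y_n^{\ast}Z_n)| \le \|a\|\|b\|\left(\frac{1}{N}\sum_{n=1}^{N}\lambda(|Z_n|^2)\right)^{1/2},
\]
which tends to $0$. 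Hence $\frac{1}{N}\sum_{n=1}^{N}\bigl(\lambda(|X_n|^2)-\lambda(|Y_n|^2)\bigr)\to 0$. Since both Ces\`aro limits in Eq.~(\ref{prodRSV}) exist unconditionally by Lemma~\ref{prop160}, the vanishing of their difference in average forces equality, so Eq.~(\ref{prodRSV}) holds and the reverse implication of Lemma~\ref{prop160} delivers the conclusion.

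The main subtlety is that the conditional expectation $D$ is not multiplicative, so $X_n$ and $Y_n$ can only be compared through the mean-square estimate on $Z_n$ supplied by weak mixing. The double use of Cauchy--Schwarz (once in $\lambda$, once across the Ces\`aro sum), combined with the uniform operator-norm bound on $Y_n$, is precisely what converts this mean-square control into equality of the Ces\`aro limits of $\lambda(|X_n|^2)$ and $\lambda(|Y_n|^2)$. Propositions~\ref{prop172} and~\ref{prop161} offer an alternative route through the relative ergodicity of $\mathbf{A}$ that weak mixing forces, but the direct argument above bypasses them.
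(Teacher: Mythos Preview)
Your argument is correct, and it takes a genuinely different---and more economical---route than the paper's own proof. Both proofs aim at Eq.~(\ref{prodRSV}) of Lemma~\ref{prop160} and both start from the expansion of $|X_n|^2$ with $X_n=D(b\alpha^n(a))$, $Y_n=D(b)D(\alpha^n(a))$, $Z_n=X_n-Y_n$. The paper, however, treats each cross term $\lambda(Y_n^\ast Z_n)$ and $\lambda(Z_n^\ast Y_n)$ (and even $\lambda(|Y_n|^2)$ itself) by rewriting them via the trace property as quantities of the form $\mu(b'\alpha^n(a'))$ and then invoking Propositions~\ref{prop172} and~\ref{prop161} to \emph{evaluate} their Ces\`aro limits exactly, showing that all three equal $\lim_N\frac{1}{N}\sum_n\lambda(|Y_n|^2)$. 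You instead observe that a uniform bound $\lambda(|Y_n|^2)\le\|a\|^2\|b\|^2$ together with two applications of Cauchy--Schwarz already forces the cross terms to vanish in Ces\`aro mean, without any need to identify their limits. This is cleaner: it uses only Proposition~\ref{prop140} and Lemma~\ref{prop160}, bypassing relative ergodicity of $\mathbf{A}$ entirely. The paper's route, on the other hand, illustrates the use of Propositions~\ref{prop172} and~\ref{prop161} as structural tools, which may be of independent interest even if not strictly needed here.
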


\begin{proof}
Note that for all $a,b\in A$,%
\begin{align*}
\lambda\left(  |D\left(  b\alpha^{n}(a)\right)  -D(b)D(\alpha^{n}%
(a))|^{2}\right)   &  =\lambda(|D(b\alpha^{n}(a))|^{2})\\
&  -\lambda(D(\alpha^{n}(a^{\ast})b^{\ast})D(b)D(\alpha^{n}(a)))\\
&  -\lambda(D(\alpha^{n}(a^{\ast}))D(b^{\ast})D(b\alpha^{n}(a)))\\
&  +\lambda(D(\alpha^{n}(a^{\ast}))D(b^{\ast})D(b)D(\alpha^{n}(a))).
\end{align*}

Consider the second term and use the trace property of $\mu$:%
\begin{align*}
\lambda(D(\alpha^{n}(a^{\ast})b^{\ast})D(b)D(\alpha^{n}(a))) &  =\lambda
(D(\alpha^{n}(a^{\ast})b^{\ast}D(b)D(\alpha^{n}(a))))\\
&  =\mu(\alpha^{n}(a^{\ast})b^{\ast}D(b)D(\alpha^{n}(a)))\\
&  =\mu(b^{\ast}D(b)\alpha^{n}(D(a)a^{\ast})).
\end{align*}
Since $\mathbf{A}$ is ergodic relative to $\mathbf{F}$ by Proposition
\ref{prop172}, we now have by Proposition \ref{prop161} that%
\begin{align*}
&  \lim_{N\rightarrow\infty}\frac{1}{N}\sum_{n=1}^{N}\lambda(D(\alpha
^{n}(a^{\ast})b^{\ast})D(b)D(\alpha^{n}(a)))\\
&  =\lim_{N\rightarrow\infty}\frac{1}{N}\sum_{n=1}^{N}\lambda(D(b^{\ast
})D(b)\alpha^{n}(D(a)D(a^{\ast})))\\
&  =\lim_{N\rightarrow\infty}\frac{1}{N}\sum_{n=1}^{N}\lambda(|D(b)D(\alpha
^{n}(a))|^{2})
\end{align*}
Similarly
\begin{align*}
&  \lim_{N\rightarrow\infty}\frac{1}{N}\sum_{n=1}^{N}\lambda(D(\alpha
^{n}(a^{\ast}))D(b^{\ast})D(b\alpha^{n}(a)))\\
&  =\lim_{N\rightarrow\infty}\frac{1}{N}\sum_{n=1}^{N}\lambda(|D(b)D(\alpha
^{n}(a))|^{2})
\end{align*}
and%
\begin{align*}
&  \lim_{N\rightarrow\infty}\frac{1}{N}\sum_{n=1}^{N}\lambda(D(\alpha
^{n}(a^{\ast}))D(b^{\ast})D(b)D(\alpha^{n}(a)))\\
&  =\lim_{N\rightarrow\infty}\frac{1}{N}\sum_{n=1}^{N}\mu(D(b^{\ast
})D(b)\alpha^{n}(aD(a^{\ast})))\\
&  =\lim_{N\rightarrow\infty}\frac{1}{N}\sum_{n=1}^{N}\lambda(|D(b)D(\alpha
^{n}(a))|^{2})
\end{align*}
Keep in mind that all these limits exist by Proposition \ref{prop161}. Then by
Proposition \ref{prop140},%
\begin{align*}
0 &  =\lim_{N\rightarrow\infty}\frac{1}{N}\sum_{n=1}^{N}\lambda\left(
|D\left(  b\alpha^{n}(a)\right)  -D(b)D(\alpha^{n}(a))|^{2}\right)  \\
&  =\lim_{N\rightarrow\infty}\frac{1}{N}\sum_{n=1}^{N}[\lambda(|D(b\alpha
^{n}(a))|^{2})-\lambda(|D(b)D(\alpha^{n}(a))|^{2})],
\end{align*}
so%
\[
\lim_{N\rightarrow\infty}\frac{1}{N}\sum_{n=1}^{N}\lambda(|D(b\alpha
^{n}(a))|^{2})=\lim_{N\rightarrow\infty}\frac{1}{N}\sum_{n=1}^{N}%
\lambda(|D(b)D(\alpha^{n}(a))|^{2}),
\]
since both limits exist (see Lemma \ref{prop160}). By Lemma \ref{prop160} we
are done.
\end{proof}

This completes the proof of Theorem \ref{st173}. To summarize: the one
direction is given by Proposition \ref{prop151}, the other by Proposition
\ref{prop162}.

To connect this to the structure theorem in \cite{AET}, we mention the
following: Suppose that we have an asymptotically abelian W*-dynamical system
$\mathbf{A}$ with a tracial invariant state, as defined in \cite[Definition
1.10]{AET}. According to \cite[Theorem 1.14]{AET} (and Proposition
\ref{prop_equiv_one_implies_two_mixing}), such a system is weakly mixing
relative to the \emph{central system} $\mathbf{C}:=(A\cap A^{\prime}%
,\mu|_{A\cap A^{\prime}},\alpha|_{A\cap A^{\prime}})$. Theorem \ref{st173}
then shows that $\mathbf{A}\odot_{\mathbf{C}}\mathbf{A}^{\prime}$ is ergodic
relative to $\mathbf{C}$.

\section{Acknowledgements}

This work was partially supported by the National Research Foundation of South Africa.

\end{document}